\newtheorem{proposition}{Proposition}[section]
\newtheorem{theorem}[proposition]{Theorem}
\newtheorem{lemma}[proposition]{Lemma}
\newtheorem{definition}[proposition]{Definition}
\theoremstyle{remark}
\newcommand{\G}{\mathbb{G}}
\newcommand{\wap}{\operatorname{wap}}
\newcommand{\mc}{\mathcal}
\newcommand{\ip}[2]{\langle #1,#2 \rangle}
\newcommand{\vnten}{\overline\otimes}
\newcommand{\proten}{\widehat\otimes}
\newcommand{\id}{\operatorname{id}}
\newcommand{\scten}{\stackrel{\operatorname{sc}}{\otimes}}
\newcommand{\aone}{\Box}
\newcommand{\atwo}{\Diamond}
\newcommand{\M}{{\textsf{M}}}
\newcommand{\N}{{\textsf{N}}}
\newcommand{\Meas}{\mathfrak{M}}
\begin{document}

\title{Non-commutative separate continuity and weakly almost periodicity
for Hopf von Neumann algebras}
\author{Matthew Daws}
\maketitle

\begin{abstract}
For a compact Hausdorff space $X$, the space $SC(X\times X)$ of separately
continuous complex valued functions on $X$ can be viewed as a $C^*$-subalgebra
of $C(X)^{**}\vnten C(X)^{**}$, namely those elements which slice into $C(X)$.
The analogous definition for a non-commutative
$C^*$-algebra does not necessarily give an algebra, but we show that there is
always a greatest $C^*$-subalgebra.  This thus gives a non-commutative
notion of separate continuity.  The tools involved are multiplier algebras and
row/column spaces, familiar from the theory of Operator Spaces.  We make some
study of morphisms and inclusions.  There is a
tight connection between separate continuity and the theory of weakly almost
periodic functions on (semi)groups.  We use our non-commutative tools to show
that the collection
of weakly almost periodic elements of a Hopf von Neumann algebra, while itself
perhaps not a $C^*$-algebra, does always contain a greatest $C^*$-subalgebra.
This allows us to give a notion of non-commutative, or quantum, semitopological
semigroup, and to briefly develop a compactification theory in this context.

Keywords: Separate continuity, Hopf von Neumann algebra, $C^*$-bialgebra,
weakly almost periodic function.

2010 \emph{Mathematics Subject Classification:}
   Primary 43A60, 46L89; Secondary 22D15, 47L25
\end{abstract}

\section{Introduction}

A semigroup which carries a topology is \emph{semitopological} if the product
is separately continuous.  Natural examples arise when studying semigroup
compactifications of groups, or from semigroups of operators on reflexive
Banach spaces (indeed, these are linked, see \cite[Theorem~4.6]{meg}).
Indeed, for a locally compact group $G$, we say that
$f\in C_b(G)$ is \emph{weakly almost periodic} if the collection of (left,
or equivalently, right) translates of $f$ forms a relatively weakly compact
subset of $C_b(G)$.  The collection of all such functions, $\wap(G)$, forms
a $C^*$-subalgebra of $C_b(G)$ with character space $G^{\wap}$ say.  The
product on $G$ can be extended to $G^{\wap}$, and the resulting semigroup is
compact and semitopological.  Furthermore, $G^{\wap}$ is the maximal compact
semitopological semigroup to contain a dense homomorphic copy of $G$; see
\cite{bjm} and references therein.

We are interested in a ``quantum group'' approach to such questions.  A
compact semigroup $S$ canonically gives rise to a $C^*$-bialgebra by
considering $A=C(S)$ and the coproduct $\Delta:A\rightarrow A\otimes A
\cong C(S\times S)$ defined by $\Delta(f)(s,t) = f(st)$.  If $S$ is only
semitopological, then the natural codomain of $\Delta$ is now $SC(S\times S)$
the space of separately continuous functions.  Given the tight connection
between weakly almost periodic functions and separate continuity, it seems
likely that to study a notion of ``weakly almost periodicity'' for, say,
$C^*$-bialgebras, or Hopf von Neumann algebras, we will need a good notion of
``separate continuity'' for general $C^*$-algebras.  Indeed, for commutative
Hopf von Neumann algebras, similar connections were explored fruitfully in
\cite[Section~4]{dawswap}.
This paper develops a suitable non-commutative framework to attack this problem.

The most important class of Hopf von Neumann algebras are those arising
from locally compact quantum groups, \cite{kvvn}, as these completely
generalise the classical group algebras $L^1(G)$ and the ``dual picture'',
the Fourier algebra $A(G)$, \cite{eymard}.  For a locally compact
quantum group $\G$ we consider the Hopf von Neumann algebra
$(L^\infty(\G),\Delta)$, and use $\Delta$ to turn the predual $L^1(\G)$ into
a (completely contractive) Banach algebra.  There is a notion of a
\emph{weakly almost periodic} element of the dual of a Banach algebra (see
Section~\ref{sec:wap} below) and thus one can talk of $\wap(L^1(\G))$.
When applied to $L^1(G)$ this theory exactly recovers $\wap(G)$, see
\cite{ulger} for example.  For the Fourier algebra, this was suggested as early
as \cite{gran}; recent work for quantum groups can be found in 
\cite[Section~4]{runde2} and \cite{hnr}, for example.
However, outside of the commutative situation, to our knowledge there has
been little study in terms of compactifications (for different notions of
compactification, compare \cite{daws, salmi, soltan}).  In particular, it
is unknown, except in a few cases (see Section~\ref{sec:open} below)
if $\wap(L^1(\G))$ is a $C^*$-algebra.

Using our non-commutative notion of separate continuity, we show that
$\wap(L^1(\G))$ always contains a greatest $C^*$-subalgebra (that is,
a $C^*$-subalgebra containing all others) which we denote by
$\wap(L^\infty(\G),\Delta)$, to avoid confusion.  In fact,
$x\in \wap(L^\infty(\G),\Delta)$ if and only if $x\in\wap(L^1(\G))$ and
also $x^*x,xx^*\in\wap(L^1(\G))$.  (The reader should note that, using this
definition, it is unclear why $x,y\in\wap(L^\infty(\G),\Delta)$ implies
that $xy\in\wap(L^\infty(\G),\Delta)$; the equivalent properties established
earlier in the paper make this clear.)  We show that our notion of
separate continuity is stable under completely bounded maps, and this is used
to show that $\wap(L^\infty(\G),\Delta)$ is an $L^1(\G)$-submodule.

Using the notion of non-commutative separate continuity, we also make a
(tentative) definition of a quantum semitopological semigroup, and show that
$\wap(L^\infty(\G),\Delta)$ fits into this framework, and can actually be
interpreted as a ``compactification'' in this category.  Thus we obtain a
rather satisfactory theory, into which our recent notion of a $C^*$-Eberlein
algebra, \cite{dawsdas}, fits.  Whether we can extend other aspects of the
$C^*$-Eberlein algebra theory, for example invariant means and decompositions,
remains an open problem, see Section~\ref{sec:open}.

The paper is organised as follows: we make some preliminary remarks, mostly
to fix notation and terminology.  In Section~\ref{sec:one} we motivate
a tentative definition of ``separate continuity'' for an arbitrary
$C^*$-algebra, show that this doesn't in general yield an algebra, but then
give Theorem~\ref{thm:main} which establishes equivalent conditions on elements
which together form the maximal $C^*$-subalgebra.  This gives the notion of
$A\scten B$, the notation chosen as $C(X)\scten C(Y) \cong SC(X\times Y)$ for
compact Hausdorff spaces $X,Y$.  In Section~\ref{sec:for_vn_algs} we simplify
the theory in the case of von Neumann algebras.
In Section~\ref{sec:morphism} we study inclusions of
$C^*$-algebras (and establish a very simple slice map property) and also
show that $\scten$ is stable under completely bounded maps.  We then apply
this theory to Hopf von Neumann algebras in Section~\ref{sec:wap}.  For a locally
compact group, we have the choice as to work with $L^\infty(G)$ or perhaps
$C_0(G)$ (and these give the same notion of weakly almost periodic function).
Motivated by this, in Section~\ref{sec:cty} we look at $C^*$-bialgebras, and
show how to consistently use the Hopf von Neumann theory here.  We end with
some open questions.

\subsection{Acknowledgments}

The author is extremely grateful to Taka Ozawa who suggested to look at the
conditions in Theorem~\ref{thm:main}.  The work was initiated at the meeting
``Operator Methods in Harmonic Analysis'' at Queen's University Belfast,
organised by Ivan Todorov and Lyudmila Turowska and partially supported by
the London Mathematical Society.  The author thanks Piotr So{\l}tan and the referee
for helpful comments.

\section{Preliminaries}

For a Hilbert space $H$ we write the inner-product as $(\cdot|\cdot)$.
We write $\mc B(H)$ for the bounded linear maps on
$H$, and write $\mc B_0(H)$ for the ideal of compact operators.  Thus
$\mc B_0(H)$ is the closed span of the rank-one operators of the form
$\theta_{\xi,\eta}: \alpha \mapsto (\alpha|\eta)\xi$.  We denote by
$\omega_{\xi,\eta}$ the normal functional $\mc B(H)\rightarrow\mathbb C;
x\mapsto (x\xi|\eta)$.

We write $\otimes$ and $\vnten$ for tensor products of $C^*$-algebras and
von Neumann algebras, respectively.  For a von Neumann algebra $\M$ we
write $\M_*$ for its predual.
The multiplier algebra of a $C^*$-algebra $A$
is denoted by $M(A)$, and we use the notion of a non-degenerate
$*$-homomorphism $\theta:A\rightarrow M(B)$; we always have the strictly
continuous extension $\tilde\theta:M(A)\rightarrow M(B)$, see
\cite[Appendices]{mnw} for example.

We work with the standard theory of Operator Spaces, \cite{ER, Pisier}.
In particular, we write $\proten$ for the operator space projective tensor
product: the main result we use is that for von Neumann algebras $\M,\N$,
the predual of $\M\vnten \N$ is $\M_* \proten \N_*$.

While we mostly work with arbitrary Hopf von Neumann algebras (or, later,
$C^*$-bialgebras), the motivating examples come from the theory of locally
compact quantum groups, \cite{kvvn,kv}.  We follow the now standard notation
that for a locally compact quantum group $\G$, we write $L^\infty(\G)$ for
the von Neumann algebraic version, $L^1(\G)$ for its predual, and $C_0(\G)$
for the $C^*$-algebra version.

Finally, we use standard properties of weakly compact operators between
Banach spaces, see \cite[Section~5, Chapter~VI]{conway} for example:
if $T:E\rightarrow F$ is a bounded linear map then $T$ is weakly compact
if and only if $T^*:F^*\rightarrow E^*$ is weakly compact, if and only if
$T^{**}:E^{**} \rightarrow F^{**}$ maps into $F$.

Other terminology and theory will be introduced as needed.

\section{Motivation, and separate continuity}\label{sec:one}

We are ultimately interested in studying analogues of ``weakly almost periodic''
functions, in the context of locally compact quantum groups, or just Hopf von
Neumann (or $C^*$-) algebras.  When $G$ is a locally compact group, we consider
the group algebra $L^1(G)$ and turn $L^\infty(G)$ into an $L^1(G)$-bimodule in the
usual way.  The following are equivalent for a function $F\in L^\infty(G)$:
\begin{enumerate}
\item The orbit map $L^1(G)\rightarrow L^\infty(G), a\mapsto a\cdot F$ (or
equivalently $F\cdot a)$ is a weakly compact operator;
\item $F\in C_b(G)$ and the collection of left (or equivalently right) translates
of $F$ forms a relatively weakly compact subset of $F$.
\end{enumerate}
Compare \cite{ulger} for example.  As explained in the introduction,
the collection of such $F$ forms a unital $C^*$-subalgebra of $C_b(G)$, and the
character space is the compact semitopological semigroup $G^{\wap}$.

This gives some high-level motivation for looking at separately continuous
functions.  Furthermore, in \cite{dawswap} for example, consideration of spaces
of separately continuous functions proved to be a very useful technical tool.
Let $X$ be a compact Hausdorff space and denote by $SC(X\times X)$ the
space of separately continuous functions $X\times X\rightarrow\mathbb C$.
Following the clear presentation of \cite[Section~2]{runde} (ultimately using
work of Grothendieck) we have that for $f\in SC(X\times X)$ and $\mu,\lambda
\in \Meas(X)$, measures on $X$, if we define $(\mu\otimes\id)f, (\id\otimes\lambda)f
: X\rightarrow\mathbb C$ by
\[ ((\mu\otimes\id)f)(x) = \int_X f(y,x) \ d\mu(y), \qquad
((\id\otimes\lambda)f)(x) = \int_X f(x,y) \ d\lambda(y)
\qquad (x\in X), \]
then both $(\mu\otimes\id)f, (\id\otimes\lambda)f$ are in $C(X)$, and we have a
generalised Fubini-theorem,
\[ \ip{\lambda}{(\mu\otimes\id)f} = \ip{\mu}{(\id\otimes\lambda)f}. \]
Let us write $\mc B_{w^*}(\Meas(X)\times \Meas(X),\mathbb C)$ for the space of separately
weak$^*$-continuous bilinear maps $\Meas(X)\times \Meas(X)\rightarrow \mathbb C$.
Thus we have shown that $f\in SC(X\times X)$ induces $\Phi_f$, say, in 
$\mc B_{w^*}(\Meas(X)\times \Meas(X),\mathbb C)$.  Furthermore, this establishes
an isometric isomorphism between $SC(X\times X)$ and
$\mc B_{w^*}(\Meas(X) \times \Meas(X),\mathbb C)$, see \cite[Proposition~2.5]{runde}.

Linearising the bilinear map, we get the projective tensor product
$\Meas(X) \proten \Meas(X)$ which is the predual of the commutative von Neumann algebra
$C(X)^{**} \vnten C(X)^{**}$.  Then the separately weak$^*$-continuous members of
$(\Meas(X) \proten \Meas(X))^* \cong C(X)^{**} \vnten C(X)^{**}$ are precisely those
$x\in C(X)^{**} \vnten C(X)^{**}$ which slice into $C(X)$.  Hence we obtain that
\[ SC(X\times X) \cong \{ x \in C(X)^{**} \vnten C(X)^{**} :
(\mu\otimes\id)x, (\id\otimes\mu)x \in C(X) \ (\mu\in \Meas(X)) \}. \]

This motivates, for a unital, but maybe noncommutative, $C^*$-algebra $A$, the
definition that
\[ SC(A\times A) = \{ x\in A^{**}\vnten A^{**} : (\mu\otimes\id)x,
(\id\otimes\mu)x \in A \ (\mu\in A^*) \}. \]
Unfortunately, this need not be an algebra in general, as hinted at in
\cite[Section~3]{dawsdas}.  For example, let $H$ be an infinite-dimensional
Hilbert space and set $A=\mc B_0(H)$ the compact operators on $H$.  Then
$A^{**}\cong \mc B(H)$ and $A^{**}\vnten A^{**}\cong \mc B(H\times H)$.  Consider
$\Sigma \in \mc B(H\times H)$ the swap map.  Then for $\omega_{\xi,\eta} \in
\mc B(H)_*$ and $\alpha,\beta\in H$,
\[ \big( (\omega_{\xi,\eta}\otimes\id)(\Sigma) \alpha \big| \beta \big)
= \big( \alpha\otimes\xi \big| \eta\otimes \beta \big)
= (\alpha|\eta)(\xi|\beta) = \big( \theta_{\xi,\eta} \alpha \big| \beta \big), \]
so $(\omega_{\xi,\eta}\otimes\id)(\Sigma) = \theta_{\xi,\eta}\in \mc B_0(H)$
and similarly $(\id\otimes\omega_{\xi,\eta})(\Sigma) = \theta_{\xi,\eta}\in
\mc B_0(H)$.  By linearity and continuity, we conclude that
$\Sigma\in SC(A\times A)$.  However, of
course $\Sigma^2 = 1_{H\otimes H}$ and slices of $\Sigma^2$ give all of
$\mathbb C 1_H \not\subseteq A$.

\subsection{Non-commutative separate continuity}

The following idea originates from a suggestion of Taka Ozawa.
For $C^*$-algebras $A,B$ we define, as above
\[ SC(A\times B) = \{ x\in A^{**}\vnten B^{**} : (\mu\otimes\id)x\in B,
(\id\otimes\lambda)x\in A \ (\mu\in A^*, \lambda\in B^*) \}. \]
Our aim is to find a maximal $C^*$-subalgebra of $SC(A\times B)$, this
will actually turn out to the ``maximum'' $C^*$-subalgebra.

We first recall some definitions from \cite[Section~0]{bs}.  For a $C^*$-algebra
$A$ and for $J$ a (closed, two-sided) ideal
in $A$, let $M(A;J) = \{ x\in M(A) : xA+Ax\subseteq J \}$, which is a
$C^*$-subalgebra of $M(A)$, and by restriction to $J$, is isomorphic to a
$C^*$-subalgebra of $M(J)$.  Let $A^1$ be the conditional unitisation of $A$, so
for an auxiliary $C^*$-algebra $B$, we can consider $M(A^1\otimes B;A\otimes B)$.
It is easy to see that this algebra consists of those $x\in M(A^1\otimes B)$
such that $x(1\otimes b),(1\otimes b)x\in A\otimes B$ for all $b\in B$.

In the following theorem we shall consider a $C^*$-algebra $B \subseteq\mc B(K)$
for a Hilbert space $K$.  Using a fixed orthonormal basis $(e_j)_{j\in J}$ of $K$
we can view members of $\mc B(K)$ as being $J\times J$ matrices, say
$\mc B(K) \subseteq \mathbb M_J$.  Given another $C^*$-algebra $A$ we can extend
this identification to view members of $A^{**}\vnten\mc B(K)$ as being $A^{**}$-valued
matrices, say $x\in A^{**}\vnten\mc B(K)$ corresponds to
$(x_{ij}) \in \mathbb M_J(A^{**})$.

\begin{theorem}\label{thm:main}
Let $A,B$ be $C^*$-algebras represented on Hilbert spaces $H,K$ such that the
induced maps $\mc B(H)_*\rightarrow A^*$ and $\mc B(K)_*\rightarrow B^*$ are
both onto (for example, these could be the universal representations).
We may then regard $A^{**}\vnten B^{**}$ as a subalgebra of $\mc B(H\otimes K)$.
For $x\in A^{**}\vnten B^{**}$ the following are equivalent:
\begin{enumerate}
\item\label{thm:main:one} $x, x^*x, xx^*\in SC(A\times B)$;
\item\label{thm:main:two} $x \in M(A^1\otimes \mc B_0(K);A\otimes \mc B_0(K))
\cap M(\mc B_0(H) \otimes B^1;\mc B_0(H) \otimes B)$;
\item\label{thm:main:three} we embed $A^{**}\vnten B^{**}$ into $A^{**}\vnten
\mc B(K)$ and view $x = (x_{ij}) \in \mathbb M_J(A^{**})$ as above.  We require that
each $x_{ij}\in A$, and that both $\sum_j x_{ij} x_{ij}^*$ and $\sum_j x_{ji}^* x_{ji}$
are norm convergent sums, for each $i$.  Similarly with the roles of $A$ and $B$ swapped.
\end{enumerate}
\end{theorem}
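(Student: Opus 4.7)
My plan is to split the proof into three steps: (2)$\Leftrightarrow$(3), a bookkeeping unpacking of the multiplier-algebra definition; (3)$\Rightarrow$(1), a Cauchy--Schwarz computation on slices; and (1)$\Rightarrow$(3), which carries the main analytic content.

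For (2)$\Leftrightarrow$(3), identify $A^{**}\vnten\mc B(K)\cong\mathbb M_J(A^{**})$ and recall that $A\otimes\mc B_0(K)$ sits inside as the norm closure of finitely supported $A$-valued matrices. Right-multiplying $x=(x_{ij})$ by $1\otimes\theta_{e_k,e_l}$ extracts the $k$-th column of $x$ (placed in column $l$); left multiplication extracts a row. A single column $(a_i)_i$ with $a_i\in A^{**}$ lies in $A\otimes\mc B_0(K)$ precisely when $a_i\in A$ and $\sum_i a_i^*a_i$ converges in norm. Thus the condition $x(1\otimes b),(1\otimes b)x\in A\otimes\mc B_0(K)$ for all $b\in\mc B_0(K)$ (testing with matrix units and then passing to general $b$ by norm-density of finite-rank operators) translates exactly to the $B$-side assertions in (3); the intersection with $M(\mc B_0(H)\otimes B^1;\mc B_0(H)\otimes B)$ gives the symmetric $A$-side assertions via the $\mathbb M_I(B^{**})$ representation.

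For (3)$\Rightarrow$(1), the row and column summability yields a Cauchy--Schwarz estimate in the row (or column) Hilbert module over $A$, showing that each slice $(\id\otimes\omega_{\xi,\eta})x=\sum_{i,j}\xi_j\overline{\eta_i}x_{ij}$ is a norm-convergent series in $A$. Since vector functionals are norm-dense in $\mc B(K)_*$, which surjects onto $B^*$ by hypothesis, this gives $(\id\otimes\lambda)x\in A$ for all $\lambda\in B^*$, and symmetrically $(\mu\otimes\id)x\in B$. The entries $(xx^*)_{ij}=\sum_k x_{ik}x_{jk}^*$ are themselves norm-convergent by a further Cauchy--Schwarz estimate on the row conditions, so the same analysis gives $xx^*,x^*x\in SC(A\times B)$.

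The main obstacle is (1)$\Rightarrow$(3). Slicing $x$ by the vector functional $\omega_{e_i,e_j}\in B^*$ (valid because $\mc B(K)_*\to B^*$ is onto) yields each matrix entry $x_{ij}\in A$. From $xx^*\in SC(A\times B)$ we obtain $(xx^*)_{ii}=\sum_j x_{ij}x_{ij}^*\in A$ as an ultraweak identity; the crux is upgrading this to norm convergence. Equivalently, the decreasing net of positive $A$-elements $(xx^*)_{ii}-\sum_{j\in F}x_{ij}x_{ij}^*$, realized as the $(i,i)$-entry of $xP_{F^c}x^*$ for $P_{F^c}=1-\sum_{j\in F}\theta_{e_j,e_j}$, must converge to zero in norm. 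The slice map $B^*\to A$, $\lambda\mapsto(\id\otimes\lambda)(xx^*)$, is weakly compact (in fact weak$^*$-to-weak continuous) by the dual criterion recalled in the preliminaries, since its image lies in $A$. I would combine this weak compactness with a functional-calculus argument on the $C^*$-subalgebra generated by the limit $(xx^*)_{ii}$ to force the required norm convergence. Applying the same argument to $x^*x\in SC(A\times B)$ yields the complementary column-sum condition $\sum_j x_{ji}^*x_{ji}$ norm-convergent; repeating the whole analysis with the roles of $A$ and $B$ exchanged (using the $\mathbb M_I(B^{**})$ representation) produces the remaining assertions of (3). Together with (2)$\Leftrightarrow$(3) from the first step, this establishes the three-way equivalence.
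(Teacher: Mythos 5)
Your steps (2)$\Leftrightarrow$(3) and (3)$\Rightarrow$(1) are essentially sound and match the paper's route: the paper isolates the column/row extraction in a separate proposition (multiplication by $1\otimes\theta_{e_k,e_l}$ produces a single column, whose norm is $\|\sum_i z_{il}^*z_{il}\|^{1/2}$), and proves (2)$\Rightarrow$(1) by computing $z^*y$ for $y=x(1\otimes\theta_{e_i,\eta})$, $z=x(1\otimes\theta_{e_j,\xi})$ rather than by your direct Cauchy--Schwarz series manipulation, but the content is the same. One small point you should make explicit in (3)$\Rightarrow$(1): the surjectivity of $\mc B(K)_*\to B^*$ is what lets you pass from vector functionals to all of $B^*$, and the norm convergence of $(xx^*)_{ij}=\sum_k x_{ik}x_{jk}^*$ needs the Cauchy--Schwarz bound $\|\sum_{k\in F}x_{ik}x_{jk}^*\|\le\|\sum_{k\in F}x_{ik}x_{ik}^*\|^{1/2}\|\sum_{k\in F}x_{jk}x_{jk}^*\|^{1/2}$ applied to tails; you assert this but it is worth writing out.

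The genuine gap is in (1)$\Rightarrow$(3), precisely at the step you yourself flag as the crux. Your proposed mechanism --- weak compactness of the slice map $\lambda\mapsto(\id\otimes\lambda)(xx^*)$ combined with functional calculus on $C^*((xx^*)_{ii})$ --- does not work as stated. The partial sums $s_F=\sum_{j\in F}x_{ij}x_{ij}^*$ arise as $(\id\otimes\tau_F)(xx^*)$ with $\tau_F=\sum_{j\in F}\omega_{e_j,e_j}$, and $\|\tau_F\|=|F|$ is unbounded, so weak compactness of the slice map on bounded sets says nothing about this net; moreover the differences $(xx^*)_{ii}-s_F$ do not lie in the $C^*$-subalgebra generated by the limit, so functional calculus there has nothing to act on. The tool that actually closes this step is the order-theoretic Dini theorem for $C^*$-algebras (\cite[Lemma~A.3]{kv}, cited in the paper): an increasing net in $A^+$, bounded above, which converges against every state of $A$ to an element of $A$, converges in norm (proved by applying classical Dini to the decreasing functions $\mu\mapsto\mu\bigl((xx^*)_{ii}-s_F\bigr)$ on the weak$^*$-compact state space, using $\|b\|=\sup_\mu\mu(b)$ for $b\ge0$). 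To invoke it you also need that $s_F\to(xx^*)_{ii}$ against every state $\mu$ of $A$, not merely ultraweakly in $\mc B(H)$; this is where the hypothesis that $\mc B(H)_*\to A^*$ is onto enters, since one extends $\mu$ to a normal functional on $\mc B(H)$ and uses the strong convergence of the partial sums. Without replacing your sketch by this Dini argument (or an equivalent), the implication (1)$\Rightarrow$(3) is not established.
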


We now proceed to prove Theorem~\ref{thm:main} with an aim to prove a little more
than strictly necessary.  Throughout the rest of this section, $A,B$ will be
$C^*$-algebras.  Let $\pi_A:A\rightarrow\mc B(H), \pi_B:B\rightarrow\mc B(K)$ be
arbitrary, non-degenerate, $*$-homomorphisms with normal extensions $\tilde\pi_A:
A^{**}\rightarrow\mc B(H), \tilde\pi_B:B^{**}\rightarrow\mc B(K)$, see for
example \cite[Section~2, Chapter~III]{tak}.
Notice that under the identification of $x\in A^{**}\vnten\mc B(K)$ with
$(x_{ij}) \in \mathbb M_J(A^{**})$ we have that $x_{ij} = (\id\otimes\omega_{e_j,e_i})(x)$
(observe the order of indices).  In particular, if
$x = (\tilde\pi_A\otimes\tilde\pi_B)(y)$ for some
$y\in SC(A\times B)$ then $x_{ij}\in \pi_A(A)$ for all $i,j$.

\begin{proposition}\label{prop:2nd_two_equivs}
Let $\pi_A,\pi_B$ be as above, and let $x\in \mc B(H\otimes K)$.
The following are equivalent:
\begin{enumerate}
\item\label{prop:2nd_two_equivs:one}
$x\in M(\pi_A(A^1)\otimes\mc B_0(K); \pi_A(A)\otimes\mc B_0(K))$;
\item\label{prop:2nd_two_equivs:two}
with $x$ identified with $(x_{ij})$ we have that $x_{ij}\in \pi_A(A)$ for all
$i,j$, and both $\sum_j x_{ij} x_{ij}^*$ and $\sum_j x_{ji}^* x_{ji}$ are norm
convergent sums in $\mc B(H)$, and hence in $\pi_A(A)$, for each $i$.
\end{enumerate}
\end{proposition}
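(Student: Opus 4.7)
The plan is to first strip off the multiplier-algebra clause in (1). By the description of $M(D;J)$ recalled from \cite[Section~0]{bs}, condition (1) asks that $x$ be a multiplier of $\pi_A(A^1)\otimes\mc B_0(K)$ for which $x(1\otimes k)$ and $(1\otimes k)x$ lie in $\pi_A(A)\otimes\mc B_0(K)$ for every $k\in\mc B_0(K)$. I would observe that, conversely, the two slice conditions already imply the multiplier requirement: any $d\in\pi_A(A^1)\otimes\mc B_0(K)$ is approximable by finite sums $\sum_j a_j\otimes k_j$, and $(a_j\otimes 1)[(1\otimes k_j)x]\in\pi_A(A)\otimes\mc B_0(K)$ because $\pi_A(A^1)\cdot\pi_A(A)\subseteq\pi_A(A)$. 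So (1) is equivalent to the assertion that for every $k\in\mc B_0(K)$, both $(1\otimes k)x$ and $x(1\otimes k)$ lie in $\pi_A(A)\otimes\mc B_0(K)$.

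The next step is to test these slice conditions on the matrix units $E_{pq}=\theta_{e_p,e_q}\in\mc B_0(K)$. Using the formal expansion $x=\sum_{k,l}x_{kl}\otimes E_{kl}$ and the rule $E_{pq}E_{kl}=\delta_{qk}E_{pl}$, a direct calculation gives
\[ (1\otimes E_{pq})x = \sum_l x_{ql}\otimes E_{pl}, \qquad x(1\otimes E_{pq}) = \sum_k x_{kp}\otimes E_{kq}, \]
so $(1\otimes E_{pq})x$ is a single row and $x(1\otimes E_{pq})$ a single column. The crux is then a row/column lemma: a single-row element $y=\sum_l a_l\otimes E_{pl}\in\mc B(H\otimes K)$ lies in the minimal tensor product $\pi_A(A)\otimes\mc B_0(K)$ if and only if each $a_l\in\pi_A(A)$ and $\sum_l a_l a_l^*$ converges in norm (and a dual statement for single columns, with $\sum_l a_l^* a_l$). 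The computation $\|\sum_{l\notin F} a_l\otimes E_{pl}\|^2 = \|\sum_{l\notin F} a_l a_l^*\|$ ties the tail norm to the partial sums, while slicing by $\omega_{e_p,e_l}$ recovers the entries in $\pi_A(A)$.

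Granted this, (1)$\Rightarrow$(2) falls out by applying the slice conditions to each $E_{pq}$ and reading off the row/column conditions on the $q$-th row and $p$-th column of $(x_{ij})$ for all $p,q$. Conversely, (2)$\Rightarrow$(1) first produces the statement for each $E_{pq}$ from the row/column lemma, then extends by linearity to all finite-rank $k$, and finally to arbitrary $k\in\mc B_0(K)$ using the bound $\|(1\otimes k)x\|\le\|k\|\,\|x\|$, norm-density of finite-rank operators in $\mc B_0(K)$, and norm-closedness of $\pi_A(A)\otimes\mc B_0(K)$.

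The step I expect to be the real obstacle is the forward half of the row/column lemma: showing that membership in the minimal tensor product forces \emph{norm} convergence of the tail $\sum_{l\notin F}a_l a_l^*$, not merely strong or weak convergence. Here the trick is to premultiply any norm-approximant of $y$ by $1\otimes E_{pp}$ (which fixes $y$), thereby reducing to approximants supported in row $p$; these are literally partial sums $y_F$, and then $\|y-y_F\|^2$ directly controls the desired tail sum. Everything else in the argument is matrix-unit bookkeeping.
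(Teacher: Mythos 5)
This is correct and is essentially the paper's own argument: reduce (1) to the two slice conditions $x(1\otimes k),(1\otimes k)x\in\pi_A(A)\otimes\mc B_0(K)$, evaluate them on matrix units so that one is looking at single rows and columns, and use $\bigl\|\sum_{l\notin F}a_l\otimes E_{pl}\bigr\|^2=\bigl\|\sum_{l\notin F}a_la_l^*\bigr\|$ together with compression of a finite-matrix approximant onto the relevant row or column to control the tails. The only imprecision is the phrase ``these are literally partial sums $y_F$'' --- the compressed approximant has entries $w_{pl}$ rather than $a_l$ --- but since $w_{pl}=0$ off a finite set $F$ and the summands are positive one still gets $\bigl\|\sum_{l\notin F}a_la_l^*\bigr\|\le\|y-w\|^2$, which is exactly the step the paper performs.
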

\begin{proof}
Firstly, observe that (\ref{prop:2nd_two_equivs:one}) is equivalent to
$x(1\otimes\theta), (1\otimes\theta)x \in \pi_A(A)\otimes\mc B_0(K)$ for all
$\theta\in\mc B_0(K)$.  Let us consider the case of $x(1\otimes\theta)$.
By linearity and continuity, we need only check that $x(1\otimes\theta_{e_k,e_l})\in \pi_A(A)\otimes\mc B_0(K)$ for all $k,l\in J$.

Fix $k,l\in J$ and let $z=x(1\otimes \theta_{e_k,e_l})$.
Considering $z$ as a matrix in $\mathbb M_J(\mc B(H))$ we have that
\[ z_{ij} = (\id\otimes\omega_{e_j,e_i})(x(1\otimes\theta_{e_k,e_l}))
= \delta_{l,j}(\id\otimes\omega_{e_k,e_i})(x)
= \delta_{l,j} x_{ik} \in \pi_A(A). \]
Thus the matrix of $z$ actually consists of one non-zero column.

Suppose that (\ref{prop:2nd_two_equivs:two}) holds, let $J_0\subseteq J$ be a
finite subset, and define
\[ w_{ij} = \begin{cases} z_{ij} &: i\in J_0, \\
0 &:\text{otherwise} \end{cases} \]
Thus $w=(w_{ij})$ is a finitely supported matrix and $w_{ij}\in\pi_A(A)$ for
all $i,j$, so clearly $w\in \pi_A(A)\otimes\mc B_0(K)$.  As both $w$ and $z$
are just single columns, we immediately see that
\[ \|z-w\| = \Big\| \sum_{i\in J} (z_{il}-w_{il})^*(z_{il}-w_{il})\Big\|^{1/2}
= \Big\| \sum_{i\not\in J_0} z_{il}^*z_{il}\Big\|^{1/2}
= \Big\| \sum_{i\not\in J_0} x_{ik}^*x_{ik}\Big\|^{1/2}. \]
By assumption, $\sum_i x_{ik}^*x_{ik}$ converges in norm, and so for any
$\epsilon>0$ there is $J_0$ finite with $\|z-w\| \leq \epsilon$.
Thus $z \in \pi_A(A)\otimes\mc B_0(K)$.

Conversely, suppose (\ref{prop:2nd_two_equivs:one}) holds, and let $z$ be
as before, now known to be a member of $\pi_A(A)\otimes\mc B_0(K)$.
As $\pi_A(A)
\otimes\mc B_0(K) \subseteq \mathbb M_J(\mc B(H))$ is the norm closure of
finite matrices with entries in $\pi_A(A)$, for each $\epsilon>0$ there is
such a finite matrix $w=(w_{ij})$ with $\|z-w\|\leq\epsilon$.  As the operation
of projecting onto the $l$th column is (completely) contractive, we may suppose
without loss of generality that $w_{ij} = \delta_{l,j} w_{ij}$ for all $i,j$.
Thus $(z-w)$ has matrix consisting of just a non-zero column, and so
\[ \epsilon \geq \|z-w\| = \Big\| \sum_i (z_{il}-w_{il})^*(z_{il}-w_{il})
\Big\|^{1/2} = \Big\| \sum_i (x_{ik}-w_{il})^*(x_{ik}-w_{il}) \Big\|^{1/2}. \]
If $w_{il} = 0$ for $i\not\in J_0$ with $J_0$ finite, then in particular
\[ \Big\| \sum_{i\not\in J_0} x_{ik}^*x_{ik} \Big\|^{1/2} \leq \epsilon, \]
as required.

We have hence shown that $x(1\otimes\theta)\in \pi_A(A)\otimes\mc B_0(K)$ for
all $\theta\in \mc B_0(K)$, if and only if $\sum_j x_{ji}^*x_{ji}$ is norm
convergent for each $i$.
The other case follows similarly, or from simply replacing $x$ by $x^*$.
\end{proof}

\begin{proof}[Proof of Theorem~\ref{thm:main}]
That (\ref{thm:main:two}) and (\ref{thm:main:three}) are equivalent
follows from Proposition~\ref{prop:2nd_two_equivs}.

Suppose that (\ref{thm:main:two}) holds.  Let $\xi,\eta\in K$,
let $i,j\in J$, and set $y = x(1\otimes\theta_{e_i,\eta}), z =
x(1\otimes\theta_{e_j,\xi})$.  Then
\[ z^*y = (1\otimes\theta_{\xi,e_j}) x^*x (1\otimes\theta_{e_i,\eta})
= (\id\otimes\omega_{e_i,e_j})(x^*x) \otimes \theta_{\xi,\eta}. \]
By assumption, $y,z\in A\otimes\mc B_0(K)$, and so
$(\id\otimes\omega_{e_i,e_j})(x^*x) \in A$.  By linearity and continuity,
and using that $\mc B(K)_*\rightarrow B^*$ is onto, it follows that
$(\id\otimes\mu)(x^*x)\in A$ for all $\mu\in B^*$.  Similarly,
$(\id\otimes\mu)(xx^*)\in A$ for all $\mu\in B^*$.  Furthermore,
$(\id\otimes\omega_{\xi,e_j})(y) = (\xi|\eta) (\id\otimes\omega_{e_i,e_j})(x)
\in A$ and so by a suitable choice of $\xi,\eta$, and again by linearity
and continuity, we conclude that $(\id\otimes\mu)(x) \in A$ for all
$\mu\in B^*$.  Repeating the argument with
the roles of $A$ and $B$ swapped shows that (\ref{thm:main:one}) holds.

Conversely, suppose that (\ref{thm:main:one}) holds.  By the discussion
above, the matrix $(x_{ij})$ does consist of elements of $A$.
The matrix representation
of $x^*x$ is $(x^*x)_{ij} = \sum_k x_{ki}^* x_{kj}$, the sum converging strongly
in $\mc B(H)$, for example.  By assumption, $(x^*x)_{ij}\in A$.
For each positive $\mu\in A^*$ choose $\omega\in\mc B(H)_*$ with $\omega|_A = \mu$.
Writing $J_0\subset\subset J$ to indicate that $J_0$ is a finite subset of $J$,
we see that
\begin{align*}
\sup_{J_0\subset\subset J} \ip{\mu}{\sum_{k\in J_0} x_{ki}^* x_{kj}}
&= \sup_{J_0\subset\subset J} \ip{\sum_{k\in J_0} x_{ki}^* x_{kj}}{\omega}
= \lim_{J_0\subset\subset J}\ip{\sum_{k\in J_0} x_{ki}^* x_{kj}}{\omega} \\
&= \sum_{k\in J} \ip{x_{ki}^* x_{kj}}{\omega}
= \ip{(x^*x)_{ij}}{\omega}
= \ip{\mu}{(x^*x)_{ij}}.
\end{align*}
So we have an increasing net in $A^+$ which converges, against elements of
the unit ball of $A^*_+$, to an element of $A^+$.
By an application of Dini's Theorem, compare \cite[Lemma~A.3]{kv}, it follows
that $\sum_{k} x_{ki}^* x_{kj} = (x^*x)_{ij}$ with convergence in norm in $A$.
Applying a similar argument to $xx^*$, and then swapping the roles of $A$
and $B$, shows that (\ref{thm:main:three}) holds.
\end{proof}

Let us now define the object which we shall study for the rest of the paper.

\begin{definition}
For $C^*$-algebras $A$ and $B$ define
\[ A\scten B = \{ x\in SC(A\times B) :
x^*x,xx^*\in SC(A\times B)\}. \]
\end{definition}

\begin{theorem}\label{thm:main_app}
For $C^*$-algebras $A,B$ we have that $A\scten B$ is a $C^*$-subalgebra of
$A^{**}\vnten B^{**}$ and every $*$-algebra contained in $SC(A\times B)$ is
contained in $A\scten B$.
\end{theorem}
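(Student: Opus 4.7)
The plan is to apply Theorem~\ref{thm:main}, in particular the equivalence of conditions~(\ref{thm:main:one}) and~(\ref{thm:main:two}). The virtue of~(\ref{thm:main:two}) is that it exhibits $A \scten B$ as an intersection of two explicit multiplier-type algebras inside $\mc B(H \otimes K)$, each of which is manifestly a $C^*$-subalgebra, so closure of $A \scten B$ under the $C^*$-operations becomes automatic.

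Concretely, I would fix the universal representations of $A$ and $B$, so that $A^{**} \vnten B^{**}$ sits as a von Neumann subalgebra of $\mc B(H \otimes K)$, and set
\[ S_1 = M(A^1 \otimes \mc B_0(K); A \otimes \mc B_0(K)), \qquad S_2 = M(\mc B_0(H) \otimes B^1; \mc B_0(H) \otimes B). \]
Each $S_i$ is norm-closed and self-adjoint directly from its definition; for multiplicative closure of $S_1$, given $x, y \in S_1$ and $c \in A^1 \otimes \mc B_0(K)$, one has $(xy) c = x(yc) \in x \cdot (A \otimes \mc B_0(K)) \subseteq A \otimes \mc B_0(K)$, using the inclusion $A \otimes \mc B_0(K) \subseteq A^1 \otimes \mc B_0(K)$, on which $x$ acts as a multiplier into $A \otimes \mc B_0(K)$; the check for right multiplication is symmetric. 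Hence $S_1$, $S_2$, and their intersection $S_1 \cap S_2$ are $C^*$-subalgebras of $\mc B(H \otimes K)$. By Theorem~\ref{thm:main}, $A \scten B = (A^{**} \vnten B^{**}) \cap S_1 \cap S_2$, and this intersection of $C^*$-subalgebras is itself a $C^*$-subalgebra of $A^{**} \vnten B^{**}$.

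For the maximality clause the argument is essentially tautological: if $D \subseteq SC(A \times B)$ is any $*$-subalgebra and $x \in D$, then closure of $D$ under involution and multiplication gives $x^*, x^*x, xx^* \in D \subseteq SC(A \times B)$, which together with $x \in SC(A \times B)$ is precisely the defining condition for $x \in A \scten B$. Thus $D \subseteq A \scten B$. Since all the real content is already packed into Theorem~\ref{thm:main}, I do not anticipate any further obstacle; the only point requiring care is the verification that $S_1$ and $S_2$ are genuinely $C^*$-subalgebras, and this rests on nothing more than the ideal property $A \otimes \mc B_0(K) \subseteq A^1 \otimes \mc B_0(K)$.
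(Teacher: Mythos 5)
Your argument is correct and is essentially the paper's own proof: the first claim is read off from the equivalence of conditions (\ref{thm:main:one}) and (\ref{thm:main:two}) in Theorem~\ref{thm:main}, since the latter exhibits $A\scten B$ as the intersection of $A^{**}\vnten B^{**}$ with two algebras of the form $M(A;J)$, which are $C^*$-algebras (as already noted in the paper, following \cite{bs}), and the maximality claim is immediate from the definition. You merely spell out the routine verification that $M(A^1\otimes\mc B_0(K);A\otimes\mc B_0(K))$ is a $C^*$-subalgebra, which the paper takes as known.
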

\begin{proof}
The first claim follows immediately from the equivalence of (\ref{thm:main:one})
and (\ref{thm:main:two}) above, as (\ref{thm:main:two}) is stated in terms of
the intersection of two $C^*$-algebras.  The second claim is immediate.
\end{proof}

\subsection{For von Neumann algebras}\label{sec:for_vn_algs}

We now aim to apply this construction to von Neumann algebras $\M,\N$.
By definition, this would involve working in $\M^{**}\vnten \N^{**}$, but the
constructions in this section allow us to work with $\M\vnten \N$ instead.
When $\M,\N$ are commutative, similar (but less general) ideas are explored in
\cite[Section~4]{dawswap}.

Fix von Neumann algebras $\M,\N$ with preduals $\M_*,\N_*$.  Consider the canonical
map from a Banach space to its bidual $\kappa = \kappa_{\M_*}:\M_*\rightarrow
\M^*$.  Then $\kappa^* : \M^{**}\rightarrow \M$ is a $*$-homomorphism, normal
by construction.  In fact, $\kappa(\M_*)$ is 1-complemented in $\M^*$,
see \cite[Section~2, Chapter~III]{tak}.  Thus $\kappa^* \otimes \kappa^*$ is
a normal $*$-homomorphism $\M^{**}\vnten \N^{**} \rightarrow \M\vnten \N$.  Let
$\theta_{sc}$ be the restriction of this map to $SC(\M\times \N)$, so $\theta_{sc}$
further restricted to $\M\scten \N$ is a $*$-homomorphism, separately
weak$^*$-continuous.

By analogy with the Banach algebra situation (see Section~\ref{sec:wap} below,
or, if one prefers, a completely bounded analogue of Arens's original work,
\cite{arens})
define $\wap(\M\vnten \N)$ to be those $x\in \M\vnten \N$ such that the maps
\[ L_x : \M_*\rightarrow \N, \omega\mapsto (\omega\otimes\id)(x), \qquad
R_x:\N_*\rightarrow \M, \omega\mapsto (\id\otimes\omega)(x) \]
are weakly compact.  Notice that $L_x^* \circ \kappa_{\N_*} = R_x$ and
$R_x^*\circ \kappa_{\M_*} = L_x$, and so $L_x$ is weakly compact if and only if
$R_x$ is.

\begin{lemma}\label{lem:into_wap}
Let $x\in SC(\M\times \N)$ and set $y=\theta_{sc}(x)$.  Then $y\in\wap(\M\vnten \N)$.
\end{lemma}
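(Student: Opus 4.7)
The plan is to show $L_y : \M_* \to \N$ is weakly compact; the weak compactness of $R_y$ then follows from the identity $L_y^* \circ \kappa_{\N_*} = R_y$ noted in the paragraph preceding the lemma. I will use the criterion from the preliminaries that $L_y$ is weakly compact iff $L_y^{**} : (\M_*)^{**} \to \N^{**}$ takes values in the canonical image of $\N$ in $\N^{**}$.

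The key observation is that the hypothesis $x \in SC(\M \times \N)$ provides, by definition, a well-defined slice map
\[
\Phi : \M^* \to \N, \qquad \mu \mapsto (\mu \otimes \id)(x),
\]
whose range lies in $\N$ rather than merely $\N^{**}$. Unwinding the definition $y = \theta_{sc}(x) = (\kappa^* \otimes \kappa^*)(x)$, and using that $\kappa = \kappa_{\M_*} : \M_* \hookrightarrow \M^*$ is the preadjoint of $\kappa^*$, a short computation with the tensor-product pairing yields the factorization
\[
L_y \;=\; \Phi \circ \kappa_{\M_*}.
\]

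Taking biduals, $L_y^{**} = \Phi^{**} \circ (\kappa_{\M_*})^{**}$, and $(\kappa_{\M_*})^{**}$ is simply the canonical embedding of $(\M_*)^{**} = \M^*$ into $\M^{***}$. A direct pairing calculation then gives, for $\mu \in \M^*$ and $\nu \in \N^*$,
\[
\ip{L_y^{**}(\mu)}{\nu} \;=\; \ip{x}{\mu \otimes \nu} \;=\; \ip{\nu}{\Phi(\mu)},
\]
which expresses exactly that $L_y^{**}(\mu)$ is the image of $\Phi(\mu) \in \N$ under $\kappa_{\N}$. Hence $L_y^{**}$ lands in $\N \subseteq \N^{**}$, so $L_y$ is weakly compact and $y \in \wap(\M \vnten \N)$.

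The only real difficulty is the bookkeeping of the several canonical embeddings $E \hookrightarrow E^{**}$ in play ($\M_* \hookrightarrow \M^*$, $\M^* \hookrightarrow \M^{***}$, and $\N \hookrightarrow \N^{**}$), so that each $\kappa$ appearing in the computation is identified with the correct arrow; once these are tracked, every step is a single line of definitions.
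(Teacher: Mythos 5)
Your overall strategy---verifying weak compactness via the bidual criterion---is essentially the paper's argument in different packaging (the paper runs the equivalent ``bounded net'' version of the criterion for $R_y$), and the factorization $L_y=\Phi\circ\kappa_{\M_*}$ is correct. But there is a genuine error at the step where you pass to biduals. The operator $(\kappa_{\M_*})^{**}\colon(\M_*)^{**}=\M^*\to\M^{***}$ is \emph{not} the canonical embedding $\kappa_{\M^*}$: for $\mu\in\M^*$ and $\Psi\in\M^{**}$ one has $\ip{(\kappa_{\M_*})^{**}\mu}{\Psi}=\ip{\mu}{\kappa_{\M_*}^*\Psi}$, where $\kappa_{\M_*}^*\colon\M^{**}\to\M$ is the canonical non-injective projection, whereas $\ip{\kappa_{\M^*}\mu}{\Psi}=\ip{\Psi}{\mu}$; these differ unless $\Psi$ already lies in $\M$. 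This matters, because if your identification were correct then $L_y^{**}=\Phi^{**}\circ\kappa_{\M^*}=\kappa_{\N}\circ\Phi$ would follow by naturality alone, and the lemma would be ``proved'' using only the hypothesis $(\mu\otimes\id)(x)\in\N$. That conclusion is false: given any $y_0\in\M\vnten\N$ with $L_{y_0}$ not weakly compact (the paper exhibits one with $\M=\N=\mc B(H)$), the Arens-type extension $x_0\in\M^{**}\vnten\N^{**}$ defined by $\ip{x_0}{\mu\otimes\nu}=\ip{\mu\otimes_\atwo\nu}{y_0}=\ip{\nu}{(\mu\otimes\id)(y_0)}$ satisfies $(\mu\otimes\id)(x_0)\in\N$ for every $\mu\in\M^*$ and $(\kappa^*\otimes\kappa^*)(x_0)=y_0$, yet $y_0\notin\wap(\M\vnten\N)$.

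The repair is precisely the point your ``direct pairing calculation'' glosses over. From $L_y^{**}=\Phi^{**}\circ(\kappa_{\M_*})^{**}$ one gets $\ip{L_y^{**}(\mu)}{\nu}=\ip{\mu}{\kappa_{\M_*}^{*}\bigl(\Phi^*(\nu)\bigr)}$, and $\Phi^*(\nu)=(\id\otimes\nu)(x)$. It is the \emph{other} half of the $SC$ hypothesis, namely $(\id\otimes\nu)(x)\in\M$, which allows you to replace $\kappa_{\M_*}^{*}(\Phi^*(\nu))$ by $\Phi^*(\nu)$ itself, and only then does $\ip{L_y^{**}(\mu)}{\nu}=\ip{\mu}{(\id\otimes\nu)(x)}=\ip{x}{\mu\otimes\nu}=\ip{\Phi(\mu)}{\nu}$ with $\Phi(\mu)\in\N$ follow. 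With that one insertion your proof is complete and coincides in substance with the paper's, which likewise uses both slice conditions: one to pass the limit through the pairing and the other to identify the limit as an element of the algebra.
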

\begin{proof}
For $\omega\in \N_*$ and $\tau\in \M_*$ we have that
\[ \ip{R_y(\omega)}{\tau} = \ip{x}{\kappa(\tau)\otimes\kappa(\omega)}
= \ip{(\id\otimes\kappa(\omega))(x)}{\kappa(\tau)}
= \ip{(\id\otimes\kappa(\omega))(x)}{\tau}, \]
as by assumption, $(\id\otimes\kappa(\omega))(x) \in \M \subseteq \M^{**}$.

Let $(\omega_\alpha)$ be a bounded net in $N_*$ and by moving to a subnet if
necessary, suppose that $\omega_\alpha\rightarrow \mu\in N^*$ weak$^*$ in $N^*$.
Then for $\lambda\in \M^*$,
\[ \lim_\alpha \ip{\lambda}{R_y(\omega_\alpha)}
= \lim_\alpha \ip{\lambda}{(\id\otimes\kappa(\omega_\alpha))(x)}
= \lim_\alpha \ip{x}{\lambda \otimes \kappa(\omega_\alpha)}
= \lim_\alpha \ip{(\lambda\otimes\id)(x)}{\kappa(\omega_\alpha)}. \]
As $(\lambda\otimes\id)(x)\in N \subseteq N^{**}$ this limit is equal to
\[ \ip{(\lambda\otimes\id)(x)}{\mu} = \ip{\lambda}{(\id\otimes\mu)(x)}, \]
where $(\id\otimes\mu)(x)\in \M$.  Thus $R_y(\omega_\alpha) \rightarrow
(\id\otimes\mu)(x)\in \M$ weakly.  This establishes that $R_y$ is weakly compact,
as required.
\end{proof}

We will now proceed to show that the map $\theta_{sc}:SC(\M\times \N) \rightarrow
\wap(\M\vnten \N)$ is actually a bijection.  Firstly we show it is onto, for which
a further idea of Arens is required; we follow the notation of
\cite[Section~3]{dawsdis}, adapted to the von Neumann algebra situation.
Given $x\in \M\vnten \N$, for $\mu\in \M^*$ we define $(\mu\otimes\id)(x)\in \N$ by
\[ \ip{(\mu\otimes\id)(x)}{\omega} = \ip{\mu}{(\id\otimes\omega)(x)}
\qquad (\omega\in \N_*). \]
Similarly define $(\id\otimes\lambda)(x)\in \M$ for $\lambda\in \N^*$
Then we define two completely contractive maps
$(\M^{**}\vnten \N^{**})_* = \M^*\proten \N^*
\rightarrow (\M_*\proten \N_*)^{**} = (\M\vnten \N)^*$ by
\[ \mu\otimes\lambda \mapsto \mu \otimes_\aone \lambda, \quad
\mu\otimes_\atwo\lambda, \]
and extending by linearity and continuity, where we define
\[ \ip{\mu \otimes_\aone \lambda}{x} = \ip{\mu}{(\id\otimes\lambda)(x)},
\qquad \ip{\mu \otimes_\atwo \lambda}{x} = \ip{\lambda}{(\mu\otimes\id)(x)}. \]
(We note that the definition of $\otimes_\atwo$ in \cite[Page~16]{dawsdis}
is wrong, or at least inconsistent; one should swap $\Phi,\Psi$ in the
formula on page~16.)

The following again goes back to Arens, but we include a proof for reference
and motivation.

\begin{lemma}\label{lem:wap_char}
We have that $x\in\wap(\M\vnten \N)$ if and only if $\ip{\mu\otimes_\aone\lambda}{x}
= \ip{\mu\otimes_\atwo\lambda}{x}$ for all $\mu,\lambda$.
\end{lemma}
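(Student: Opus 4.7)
The plan is to reduce everything to a comparison of the two biadjoints $L_x^{**}$ and $R_x^{**}$. A short direct calculation gives $L_x^*(\lambda) = (\id\otimes\lambda)(x) \in \M$ and $R_x^*(\mu) = (\mu\otimes\id)(x) \in \N$, from which one reads off
\[ \langle \mu\otimes_\aone\lambda, x\rangle = \langle L_x^{**}(\mu), \lambda\rangle, \qquad \langle \mu\otimes_\atwo\lambda, x\rangle = \langle R_x^{**}(\lambda), \mu\rangle. \]
Combined with the standard characterisation that $L_x$ is weakly compact iff $L_x^{**}$ lands in $\N$, the lemma reduces to: equality of these two bracket functionals (for all $\mu,\lambda$) is equivalent to $L_x^{**}(\M^*) \subseteq \N$.

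For the easier implication ($\Leftarrow$), I would fix $\mu \in \M^*$ and observe that, under the bracket identity, the functional $\lambda \mapsto \langle L_x^{**}(\mu), \lambda\rangle$ on $\N^*$ coincides with $\lambda \mapsto \langle R_x^{**}(\lambda), \mu\rangle$. The latter is $\sigma(\N^*, \N)$-continuous because $R_x^{**}$, being the biadjoint of $R_x \colon \N_* \to \M$, is continuous from $\sigma(\N^*, \N)$ to $\sigma(\M^{**}, \M^*)$. An element of $\N^{**}$ whose action on $\N^*$ is $\sigma(\N^*, \N)$-continuous must already lie in $\N$, so $L_x^{**}(\mu) \in \N$.

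For the other implication ($\Rightarrow$), I would approximate. Assume $L_x$ weakly compact and fix $\mu \in \M^*$. Goldstine applied to $\M_* \hookrightarrow (\M_*)^{**} = \M^*$ supplies a bounded net $(\omega_\alpha) \subseteq \M_*$ with $\omega_\alpha \to \mu$ for $\sigma(\M^*, \M)$. Weak compactness of $L_x$ upgrades $L_x^{**}$ to a map continuous from $\sigma(\M^*, \M)$ into the weak topology $\sigma(\N, \N^*)$, so $L_x(\omega_\alpha) \to L_x^{**}(\mu)$ weakly in $\N$. On the other hand, $R_x^* \colon \M^* \to \N$ is always continuous from $\sigma(\M^*, \M)$ to $\sigma(\N, \N_*)$, so $R_x^*(\omega_\alpha) \to R_x^*(\mu)$ weak$^*$ in $\N$. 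Since $L_x(\omega_\alpha) = R_x^*(\omega_\alpha)$ for $\omega_\alpha \in \M_*$, the same net has both a weak and a weak$^*$ limit; uniqueness (using $\N_* \subseteq \N^*$) forces $L_x^{**}(\mu) = R_x^*(\mu)$ in $\N$, and pairing against $\lambda \in \N^*$ gives $\langle \mu\otimes_\aone\lambda, x\rangle = \langle \mu\otimes_\atwo\lambda, x\rangle$.

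The main obstacle I expect is the careful bookkeeping between three topologies on $\N$ (the weak topology $\sigma(\N, \N^*)$, the ultraweak/weak$^*$ topology $\sigma(\N, \N_*)$, and the norm) together with the Goldstine density of $\M_*$ in $\M^*$ for $\sigma(\M^*, \M)$. Once these are kept straight, the whole argument is simply the classical Arens-regularity equivalence (``a bounded bilinear form has equal Arens extensions iff its associated operator is weakly compact'') applied to $(\omega, \tau) \mapsto \langle x, \omega \otimes \tau\rangle$ on $\M_* \times \N_*$.
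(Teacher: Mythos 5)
Your argument is correct and is essentially the paper's own proof: the paper likewise takes a bounded net $(\omega_\alpha)\subseteq\M_*$ converging weak$^*$ to $\mu$, uses the bracket identity to show $L_x(\omega_\alpha)\to(\mu\otimes\id)(x)\in\N$ weakly for the ``if'' direction, and for the converse identifies the weak limit of $L_x(\omega_\alpha)$ with $(\mu\otimes\id)(x)$ by an iterated-limit computation that is exactly your weak-versus-weak$^*$ uniqueness step in disguise. The only cosmetic difference is that you package the bookkeeping through the biadjoints $L_x^{**}$, $R_x^{**}$ and the duality $(\N^*,\sigma(\N^*,\N))^*=\N$ rather than through explicit double limits.
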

\begin{proof}
To show ``if'', let $(\omega_\alpha)$ be a bounded net in $\M_*$ converging weak$^*$ to
$\mu\in \M^*$.  For $\lambda\in \N^*$ we have that
\[ \lim_\alpha \ip{\lambda}{L_x(\omega_\alpha)}
= \lim_\alpha \ip{(\id\otimes\lambda)(x)}{\omega_\alpha}
= \ip{\mu\otimes_\aone\lambda}{x}
= \ip{\mu\otimes_\atwo\lambda}{x}
= \ip{\lambda}{(\mu\otimes\id)(x)}, \]
and so $L_x(\omega_\alpha) \rightarrow (\mu\otimes\id)(x)\in \N$ weakly.
As in the proof of Lemma~\ref{lem:into_wap}, it follows that
$x \in \wap(\M\vnten \N)$.

Conversely, let $(\tau_\beta)$ in $\N_*$ converge weak$^*$ to $\lambda\in \N^*$,
and let $(\omega_\alpha)$ as before.  Assuming that $L_x$ is weakly compact,
we may assume that $L_x(\omega_\alpha)\rightarrow x_0\in \M$, say, weakly.  Then
\begin{align*} \ip{\mu\otimes_\aone\lambda}{x}
&= \lim_\alpha \ip{(\id\otimes\lambda)(x)}{\omega_\alpha}
= \lim_\alpha \ip{\lambda}{L_x(\omega_\alpha)}
= \ip{\lambda}{x_0} = \lim_\beta \ip{x_0}{\tau_\beta}
= \lim_\beta \lim_\alpha \ip{L_x(\omega_\alpha)}{\tau_\beta} \\
&= \lim_\beta \lim_\alpha \ip{R_x(\tau_\beta)}{\omega_\alpha}
= \lim_\beta \ip{\mu}{R_x(\tau_\beta)}
= \ip{\lambda}{(\mu\otimes\id)(x)}
= \ip{\mu\otimes_\atwo\lambda}{x}, \end{align*}
as required.
\end{proof}

\begin{proposition}\label{prop:wap_iso_sc}
Let $x\in\wap(\M\vnten \N)$, and define $y\in \M^{**}\vnten \N^{**} = (\M^*\proten \N^*)^*$
by $\ip{y}{\mu\otimes\lambda} = \ip{\mu\otimes_\aone\lambda}{x}
= \ip{\mu\otimes_\atwo\lambda}{x}$.
Then $y\in SC(\M\times \N)$; indeed, $(\mu\otimes\id)(y) = (\mu\otimes\id)(x)\in \N$
and $(\id\otimes\lambda)(y) = (\id\otimes\lambda)(x)\in \M$.

Conversely, if $y\in SC(\M\times \N)$ and we set $x=\theta_{sc}(y)$ then
$(\mu\otimes\id)(x) = (\mu\otimes\id)(y)\in \N$ and
$(\id\otimes\lambda)(x) = (\id\otimes\lambda)(y)\in \M$.
As such, the map $\theta_{sc} : SC(\M\times \N) \rightarrow \wap(\M\vnten \N)$
is an isomorphism.
\end{proposition}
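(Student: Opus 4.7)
The plan is to construct an explicit inverse $\Phi : \wap(\M\vnten \N) \to SC(\M\times \N)$ to $\theta_{sc}$, verifying compatibility of the slice operations on both sides by duality chasing.

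For the first assertion, given $x\in\wap(\M\vnten \N)$, I would first note that $y$ is a well-defined element of $(\M^*\proten \N^*)^* = \M^{**}\vnten \N^{**}$: since $\otimes_\aone : \M^*\proten \N^* \to (\M\vnten \N)^*$ is completely contractive, pairing with $x\in \M\vnten \N$ produces a bounded functional on $\M^*\proten \N^*$, and consistency with the $\otimes_\atwo$ definition is exactly Lemma~\ref{lem:wap_char}. The slice identities then unwind: for $\mu\in \M^*, \lambda\in \N^*$,
\[ \ip{\mu}{(\id\otimes\lambda)(y)} = \ip{y}{\mu\otimes\lambda} = \ip{\mu\otimes_\aone\lambda}{x} = \ip{\mu}{(\id\otimes\lambda)(x)}, \]
so as $\mu$ ranges over $\M^*$, $(\id\otimes\lambda)(y) = (\id\otimes\lambda)(x)\in \M$; the identity $(\mu\otimes\id)(y) = (\mu\otimes\id)(x)\in \N$ follows symmetrically using $\otimes_\atwo$. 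Hence $y\in SC(\M\times \N)$.

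For the converse, fix $y\in SC(\M\times \N)$ and set $x=\theta_{sc}(y)\in \M\vnten \N$. I would first match the normal slices: for $\tau\in \M_*, \omega\in \N_*$, the definition of $\theta_{sc}$ gives
\[ \ip{(\tau\otimes\id)(x)}{\omega} = \ip{x}{\tau\otimes\omega} = \ip{y}{\kappa(\tau)\otimes\kappa(\omega)} = \ip{(\tau\otimes\id)(y)}{\omega}, \]
where the final pairing is well-defined in $\N$ because $(\tau\otimes\id)(y)\in \N$ by $SC$. Bootstrapping to the Arens slice at $\lambda\in \N^*$ using the defining formula,
\[ \ip{\tau}{(\id\otimes\lambda)(x)} = \ip{\lambda}{(\tau\otimes\id)(x)} = \ip{\lambda}{(\tau\otimes\id)(y)} = \ip{y}{\kappa(\tau)\otimes\lambda} = \ip{\tau}{(\id\otimes\lambda)(y)}, \]
and since $\tau\in \M_*$ is arbitrary, $(\id\otimes\lambda)(x) = (\id\otimes\lambda)(y)\in \M$; the other slice identity is symmetric. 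In particular $x\in\wap(\M\vnten \N)$ by Lemma~\ref{lem:into_wap}, so $\theta_{sc}$ indeed lands in $\wap(\M\vnten \N)$.

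Writing $\Phi$ for the map $x\mapsto y$ of the first part, the two parts of the proposition directly verify both composition identities: $\Phi(\theta_{sc}(y))$ and $y$ agree on every $\mu\otimes\lambda$ by the chain $\ip{\Phi(\theta_{sc}(y))}{\mu\otimes\lambda} = \ip{\mu}{(\id\otimes\lambda)(\theta_{sc}(y))} = \ip{\mu}{(\id\otimes\lambda)(y)} = \ip{y}{\mu\otimes\lambda}$, while $\theta_{sc}(\Phi(x))$ and $x$ have the same normal slices (combine the first part at $\mu=\kappa(\tau)$ with the converse's normal-slice identification), hence coincide in $\M\vnten \N$. The main obstacle is not any single deep step but the duality bookkeeping—keeping straight the canonical embedding $\kappa : \M_*\to \M^*$, the normal versus Arens-extended slice operations, and the two extensions $\otimes_\aone, \otimes_\atwo$—so that each equality reduces to a definitional unwinding.
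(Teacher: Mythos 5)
Your proof is correct. The first half (that $y\in SC(\M\times \N)$ with $(\id\otimes\lambda)(y)=(\id\otimes\lambda)(x)$ and $(\mu\otimes\id)(y)=(\mu\otimes\id)(x)$) is exactly the ``simple calculation'' the paper leaves implicit, and you carry it out as intended, correctly citing Lemma~\ref{lem:wap_char} for the consistency of the two defining formulas. Where you genuinely diverge is in the converse: the paper takes a bounded net $(\omega_\alpha)$ in $\N_*$ converging weak$^*$ to $\lambda$, uses weak compactness of $R_x$ to realise the Arens slice of $x$ as the weak limit of the normal slices $R_x(\omega_\alpha)$ (via the biadjoint $R_x^{**}$), and then matches that limit against the computation in the proof of Lemma~\ref{lem:into_wap}. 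You instead note that $(\id\otimes\lambda)(x)$ is, by its very definition, determined by the pairings $\ip{\lambda}{(\tau\otimes\id)(x)}$ for $\tau\in\M_*$, that the normal slices of $x=\theta_{sc}(y)$ and of $y$ coincide by unwinding $\kappa^*\otimes\kappa^*$, and that the same pairing computes $(\id\otimes\lambda)(y)$, which lies in $\M$ since $y\in SC(\M\times\N)$. This is a purely algebraic duality chase with no nets and no weak compactness in the slice identification (weak compactness enters only through Lemma~\ref{lem:into_wap}, which you correctly invoke so that $\theta_{sc}$ lands in $\wap(\M\vnten\N)$). Your route is more elementary and, together with the explicit inverse $\Phi$ checked on the total sets of elementary tensors in $\M^*\proten\N^*$ and $\M_*\proten\N_*$, makes the bijectivity more transparent than the paper's terse conclusion; the paper's net argument has the side benefit of exhibiting Arens slices concretely as weak limits of normal slices, a picture it reuses in Theorem~\ref{thm:wap_cqss}.
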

\begin{proof}
The first claim follows from a simple calculation.
Now let $y\in SC(\M\times \N)$ and set $x=\theta_{sc}(y)$.
Consider the biadjoint $R_x^{**} : \N^*\rightarrow \M^{**}$, which satisfies that
if $\mu\in \N^*$ is the weak$^*$-limit of $(\omega_\alpha)\subseteq \N_*$ then
$R_x^{**}(\mu)$ is the weak$^*$-limit, in $\M^{**}$, of the net $(R_x(\omega_\alpha))$.
As $R_x$ is weakly compact, this actually converges weakly in $\M$, and the proof
of Lemma~\ref{lem:wap_char} shows that $R_x^{**}(\mu) = (\mu\otimes\id)(x)$.
However, from the proof of Lemma~\ref{lem:into_wap}, this net converges to
$(\id\otimes\mu)(y) \in \M$, and so the result follows.
\end{proof}

\begin{theorem}\label{thm:sc_into_wap}
The map $\theta_{sc}: \M\scten \N \rightarrow \wap(\M\vnten \N)$ is an injective
$*$-homomorphism, separately normal, and has image those $x\in \wap(\M\vnten \N)$
such that also $x^*x, xx^* \in \wap(\M\vnten \N)$.
\end{theorem}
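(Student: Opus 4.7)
The plan is to dispatch the algebraic/structural claims quickly and then focus on identifying the image, which is where the real work lies.

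First I would handle the easy parts in a single paragraph. By Theorem~\ref{thm:main_app}, $\M\scten\N$ is a $*$-subalgebra of $\M^{**}\vnten\N^{**}$, and $\theta_{sc}$ is the restriction to this subalgebra of the normal $*$-homomorphism $\kappa^*\otimes\kappa^*\colon \M^{**}\vnten\N^{**}\to \M\vnten\N$; this gives the $*$-homomorphism property and separate normality simultaneously. Injectivity is immediate from Proposition~\ref{prop:wap_iso_sc}, which already asserts that $\theta_{sc}$ is a bijection from the larger space $SC(\M\times\N)$ onto $\wap(\M\vnten\N)$, so a fortiori injective on $\M\scten\N\subseteq SC(\M\times\N)$.

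Next I would verify the forward inclusion of the image. For $y\in\M\scten\N$, closure under product and involution puts $y^*y$ and $yy^*$ back in $\M\scten\N$, hence in $SC(\M\times\N)$, so Lemma~\ref{lem:into_wap} places their $\theta_{sc}$-images in $\wap(\M\vnten\N)$; by multiplicativity of $\theta_{sc}$ these are exactly $x^*x$ and $xx^*$ with $x=\theta_{sc}(y)$.

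For the reverse inclusion I would use the bijection of Proposition~\ref{prop:wap_iso_sc} to manufacture, from $x,x^*x,xx^*\in\wap(\M\vnten\N)$, elements $y=\theta_{sc}^{-1}(x)$, $w_1=\theta_{sc}^{-1}(x^*x)$, $w_2=\theta_{sc}^{-1}(xx^*)$ in $SC(\M\times\N)$. The task then reduces to proving $y^*y=w_1$ and $yy^*=w_2$, for these identities put $y^*y$ and $yy^*$ into $SC(\M\times\N)$, realising $y$ as a member of $\M\scten\N$ and giving $x=\theta_{sc}(y)$ in the image.

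The heart of the argument is thus the identity $y^*y=w_1$ (with $yy^*=w_2$ following by the symmetric computation). Both elements satisfy $\kappa^*\otimes\kappa^*(y^*y)=\kappa^*\otimes\kappa^*(w_1)=x^*x$, but since $\kappa^*\otimes\kappa^*$ has non-trivial kernel on $\M^{**}\vnten\N^{**}$, this is not enough; the plan is instead to show directly that $y^*y\in SC(\M\times\N)$, whereupon injectivity of $\theta_{sc}$ on $SC(\M\times\N)$ forces $y^*y=w_1$. Concretely, one would check $(\mu\otimes\id)(y^*y)=(\mu\otimes\id)(x^*x)\in\N$ for every $\mu\in\M^*$ and symmetrically on the other factor, using the defining slice identities $(\mu\otimes\id)(y)=(\mu\otimes\id)(x)$, $(\id\otimes\lambda)(y)=(\id\otimes\lambda)(x)$ from Proposition~\ref{prop:wap_iso_sc} together with the hypothesis $x^*x\in\wap$, which precisely guarantees that $(\mu\otimes\id)(x^*x)\in\N$ and $(\id\otimes\lambda)(x^*x)\in\M$ for arbitrary, possibly non-normal, $\mu,\lambda$. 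The main obstacle is that slice maps are not multiplicative, so computing slices of $y^*y$ requires unwinding the Arens-like pairing: my intended route is via the matrix realisation of Theorem~\ref{thm:main}, since the matrix entries $y_{ij}=x_{ij}$ coincide in $\M$, the strongly convergent sums $(y^*y)_{ij}=\sum_k x_{ki}^*x_{kj}=(x^*x)_{ij}$ can be upgraded to norm-convergent ones via the Dini-type argument used in the proof of Theorem~\ref{thm:main}, fuelled by the $\wap$-hypothesis on $x^*x$ to control non-normal functionals on $\M$, and this identifies the two limits.
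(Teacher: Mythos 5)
Your handling of the $*$-homomorphism property, separate normality, injectivity, and the forward inclusion of the image coincides with the paper's proof. The divergence is in the reverse inclusion. The paper's own argument there is a one-liner: with $y=\theta_{sc}^{-1}(x)$ and $z=\theta_{sc}^{-1}(x^*x)$ it notes that $\kappa^*\otimes\kappa^*$ is a $*$-homomorphism, so $(\kappa^*\otimes\kappa^*)(z)=x^*x=(\kappa^*\otimes\kappa^*)(y^*y)$, and concludes $z=y^*y$, whence $y^*y\in SC(\M\times\N)$. You reject this inference because $\kappa^*\otimes\kappa^*$ has a large kernel, and injectivity is only known on $SC(\M\times\N)$, where $y^*y$ is not yet known to live; that is a legitimate concern, and it correctly isolates the crux of the whole theorem. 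But the substitute you offer does not close the loop.

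The problem is that your route is circular. The matrix entries $(y^*y)_{ij}=(\id\otimes\omega_{e_j,e_i})(y^*y)$ are slices of the operator $y^*y\in\M^{**}\vnten\N^{**}$ and a priori lie only in $\M^{**}$, not in $\M$. The Dini argument in the proof of Theorem~\ref{thm:main} cannot manufacture that membership: there, norm convergence of $\sum_k x_{ki}^*x_{kj}$ is deduced \emph{from} the hypothesis that the limit $(x^*x)_{ij}$ already belongs to the $C^*$-algebra, i.e.\ from $x^*x\in SC$; Dini only upgrades weak convergence to norm convergence once the limit is known to sit in the right place. In your setting that input is exactly the conclusion $y^*y\in SC(\M\times\N)$ you are after. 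The hypothesis $x^*x\in\wap(\M\vnten\N)$ gives you that the \emph{Arens-type} slices $(\id\otimes\lambda)(x^*x)$ lie in $\M$, but your asserted identity ``$(y^*y)_{ij}=(x^*x)_{ij}$'' --- equating the $\sigma$-weak limit of the partial column sums with that Arens slice --- is precisely the equality $y^*y=w_1$ read off entry by entry, and you give no argument for it. The obvious attempt (approximate $\omega_{e_j,e_i}|_{\N}$ weak$^*$ by normal functionals and pass to the limit) fails because weak$^*$ convergence in $\N^*$ is convergence against $\N$, whereas the complementary slice $(\mu\otimes\id)(y^*y)$ one must pair with lives only in $\N^{**}$ --- the same obstruction that makes Lemma~\ref{lem:into_wap} work only because $y$ is sliceable in \emph{both} variables. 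So as written your proposal does not establish the hard direction: you must either justify the paper's identification $z=y^*y$ directly, or find a genuine argument that $(\id\otimes\lambda)(y^*y)\in\M$, neither of which the Dini/matrix machinery supplies on its own.
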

\begin{proof}
It follows from Proposition~\ref{prop:wap_iso_sc} that $\theta_{sc}$ is injective,
and it is separately normal by construction.  If $y\in \M\scten \N$ then also
$y^*y,yy^*\in \M\scten \N$.  Thus if
$x=\theta_{sc}(y)$ then $x^*x = \theta_{sc}(y^*y) \in \wap(\M\vnten \N)$, and
similarly $xx^* \in \wap(\M\vnten \N)$.  Conversely, if $x\in\wap(\M\vnten \N)$
with $x^*x, xx^* \in \wap(\M\vnten \N)$ then let $y\in SC(\M\times \N)$ with
$\theta_{sc}(y) = x$.  There is $z\in SC(\M\times \N)$ with $\theta_{sc}(z) =
x^*x$.  As $\theta_{sc}$ is the restriction of $\kappa^*\otimes\kappa^*$, which
is a $*$-homomorphism to $\M\vnten \N$,
\[ (\kappa^*\otimes\kappa^*)(z) = x^*x = (\kappa^*\otimes\kappa^*)(y^*y), \]
and so $z = y^*y$.  Thus $y^*y$, and similarly $yy^*$, are members of $SC(\M\times \N)$,
and so by definition, $y\in \M\scten \N$.
\end{proof}

Hence $\M\scten \N$ is the maximum $C^*$-subalgebra of $\wap(\M\vnten \N)$.
However, again the space $\wap(\M\vnten \N)$ need not be an algebra in general.
An example of $\M$ and $t\in\wap(\M\vnten \M)$ such that $t^*t\not\in
\wap(\M\vnten\M)$ may be constructed as follows.  Let $\M=\mc B(H)$ for a
separable, infinite-dimensional Hilbert space $H$.  It is easy to find a positive
$x\in\mc B(H\otimes H)= \M\vnten\M$ such that the map $\M_*\rightarrow\M; \omega\mapsto
(\omega\otimes\id)(x)$ is not weakly compact.  Now let $u:H\rightarrow H\otimes H$
be a unitary, and fix a unit vector $\xi_0\in H$.  Then $t\in\mc B(H\otimes H)$
defined by $t(\xi) = u^*x^{1/2}(\xi) \otimes \xi_0$ has the required property.
This follows as $t^*t = x \not\in \wap(\M\vnten \M)$, while one can show that the
maps $\M_*\rightarrow\M; \omega\mapsto (\omega\otimes\id)(t)$ and
$\omega\mapsto (\id\otimes\omega)(t)$ both factor through a Hilbert space, and
so are weakly compact.

\section{Morphisms and inclusions}\label{sec:morphism}

In this section we study stability properties of $\scten$.  We start by considering
inclusions.  Let $A$ be a $C^*$-algebra and let $A_0\subseteq A$ be a $C^*$-subalgebra.
Then the inclusion $\iota:A_0\rightarrow A$
induces the inclusion $\iota^{**}:A_0^{**} \rightarrow A^{**}$ which is a
normal $*$-homomorphism.
Indeed, if we identify $A_0$ with a subalgebra of $A$, then the restriction map
$A^*\rightarrow A_0^*$ is a quotient map, with kernel $A_0^\perp = \{ \mu\in A^* :
\ip{\mu}{a}=0 \ (a\in A_0) \}$.  Then $A_0^{**} = (A_0^*)^* \cong
(A^*/A_0^\perp)^* \cong A_0^{\perp\perp}$.
If also $B_0\subseteq B$ is an inclusion of $C^*$-algebras, then we have the
chain of isometric inclusions
\[ A_0 \scten B_0 \subseteq SC(A_0\times B_0) \subseteq A_0^{**}\vnten B_0^{**}
\subseteq A^{**} \vnten B^{**}. \]
The following result gives a simple ``slice map''
criteria to determine membership of $A_0\scten B_0$.

\begin{theorem}\label{thm:inclusion}
For $x\in A^{**}\vnten B^{**}$ the following are equivalent:
\begin{enumerate}
\item\label{thm:inclusion:one}
$x$ is in (the image of) $A_0 \scten B_0$;
\item\label{thm:inclusion:three}
$x\in A\scten B$ and $x$ is in (the image of) $SC(A_0\times B_0)$
(that is, $(\mu\otimes\id)(x)\in B_0, (\id\otimes\lambda)(x)\in A_0$
for $\mu\in A^*,\lambda\in B^*$).
\end{enumerate}
\end{theorem}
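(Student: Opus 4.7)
The direction (\ref{thm:inclusion:one}) $\Rightarrow$ (\ref{thm:inclusion:three}) is essentially definitional. If $x \in A_0 \scten B_0 \subseteq A_0^{**} \vnten B_0^{**}$ then by definition $x$, $x^*x$, and $xx^*$ all lie in $SC(A_0 \times B_0)$, so their slices by $\nu \in A_0^*$ land in $B_0$ and their slices by $\rho \in B_0^*$ land in $A_0$. Any $\mu \in A^*$ restricts to $\mu|_{A_0} \in A_0^*$, and slicing $x \in A_0^{**} \vnten B_0^{**}$ by $\mu$ coincides with slicing by $\mu|_{A_0}$, so slices by all of $A^*$ land in $B_0 \subseteq B$, and analogously on the $B^*$ side. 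Hence $x$ lies in $SC(A_0 \times B_0)$ in the sense of the parenthetical in (\ref{thm:inclusion:three}), and since $B_0 \subseteq B$ and $A_0 \subseteq A$ we also have $x, x^*x, xx^* \in SC(A \times B)$, i.e.\ $x \in A \scten B$.

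For (\ref{thm:inclusion:three}) $\Rightarrow$ (\ref{thm:inclusion:one}) the plan is to apply the matrix characterization Theorem~\ref{thm:main}(\ref{thm:main:three}) to $x \in A \scten B$ and verify that the entries of $x$, $x^*x$, and $xx^*$ all lie in $A_0$ (and, by the symmetric argument, the corresponding ``$B$-side'' entries lie in $B_0$). Fix a representation $B \subseteq \mc B(K)$ with $\mc B(K)_* \to B^*$ surjective, and write $x = (x_{ij})$ as a matrix over $A^{**}$ indexed by an orthonormal basis $(e_i)$ of $K$. Each entry $x_{ij} = (\id \otimes \omega_{e_j,e_i})(x)$ corresponds under the surjection to some $(\id \otimes \lambda)(x)$ with $\lambda \in B^*$, so hypothesis (\ref{thm:inclusion:three}) forces $x_{ij} \in A_0$. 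Theorem~\ref{thm:main}(\ref{thm:main:three}) applied to $x \in A \scten B$ gives norm convergence of the column sums $\sum_k x_{ki}^* x_{ki}$ and $\sum_k x_{ik} x_{ik}^*$, whence (by a Cauchy--Schwarz estimate in the columns) also norm convergence of the cross sums $\sum_k x_{ki}^* x_{kj}$ and $\sum_k x_{ik} x_{jk}^*$, with limits $(x^*x)_{ij}$ and $(xx^*)_{ij}$ respectively; as every partial sum lies in $A_0$ and $A_0 \subseteq A$ is norm-closed, both $(x^*x)_{ij}$ and $(xx^*)_{ij}$ lie in $A_0$. Since the span of $\{\omega_{e_j,e_i}\}$ is norm-dense in $\mc B(K)_*$ and the slice map $\omega \mapsto (\id \otimes \omega)(y)$ is norm-continuous for each fixed $y$, we obtain $(\id \otimes \omega)(x^*x), (\id \otimes \omega)(xx^*) \in A_0$ for every $\omega \in \mc B(K)_*$, and surjectivity of $\mc B(K)_* \to B^*$ upgrades this to all $\lambda \in B^*$. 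Running the mirror argument with a representation of $A$ satisfying $\mc B(H)_* \to A^*$ surjective gives $(\mu \otimes \id)(x^*x), (\mu \otimes \id)(xx^*) \in B_0$ for all $\mu \in A^*$. Combined with the hypothesis that $x \in SC(A_0 \times B_0)$, we conclude $x, x^*x, xx^* \in SC(A_0 \times B_0)$, that is $x \in A_0 \scten B_0$.

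The main obstacle is mild: one must dovetail the matrix picture of $x \in A \scten B$ (entries in $A$, column sums norm-convergent) with the norm-closedness of $A_0 \subseteq A$. The slicing hypothesis in (\ref{thm:inclusion:three}) forces the matrix entries of $x$ itself into $A_0$, and the norm-convergent column sums furnished by Theorem~\ref{thm:main}(\ref{thm:main:three}) then automatically drag the entries of $x^*x$ and $xx^*$ into $A_0$ as well. The remaining ingredients---norm density of the matrix-unit functionals in $\mc B(K)_*$, and switching between independently chosen representations on the $A$ and $B$ sides---are routine.
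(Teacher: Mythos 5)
Your argument is correct and follows essentially the same route as the paper: both directions rest on the matrix characterisation of Theorem~\ref{thm:main}, with the slicing hypothesis in (\ref{thm:inclusion:three}) forcing the entries $x_{ij}$ into $A_0$ and the norm-closedness of $A_0$ absorbing the norm-convergent sums supplied by $x\in A\scten B$. The only (harmless) divergence is that the paper verifies condition (\ref{thm:main:three}) of Theorem~\ref{thm:main} directly for the pair $(A_0,B_0)$, which needs only the sums $\sum_j x_{ij}x_{ij}^*$ and $\sum_j x_{ji}^*x_{ji}$, whereas you verify condition (\ref{thm:main:one}) by showing the entries of $x^*x$ and $xx^*$ lie in $A_0$, at the cost of an extra (correct) Cauchy--Schwarz estimate for the cross sums $\sum_k x_{ki}^*x_{kj}$.
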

\begin{proof}
(\ref{thm:inclusion:one})$\implies$(\ref{thm:inclusion:three}):
For $x\in A_0\scten B_0$ and for $\mu\in B^*$, letting $\mu_0\in B_0^*$ be the
restriction, we have that $(\id\otimes\mu)(x) = (\id\otimes\mu_0)(x)
\in A_0 \subseteq A$.  Similarly $(\id\otimes\mu)(x^*x)\in A$ and
$(\id\otimes\mu)(xx^*)\in A$.  Analogously, right slices of $x,x^*x,xx^*$ are
in $B$, and so $x\in A\scten B$.  Clearly also $x\in SC(A_0\times B_0)$.

(\ref{thm:inclusion:three})$\implies$(\ref{thm:inclusion:one}):
Let $B\subseteq\mc B(K)$ be the universal representation and let $K\cong\ell^2(J)$,
so we can regard $A^{**}\vnten B^{**} \subseteq\mathbb M_J(A^{**})$ again.
From Theorem~\ref{thm:main} we know that if $x=(x_{ij})$ then the sums
$\sum_j x_{ij}x_{ij}^*$ and $\sum_j x_{ji}^*x_{ji}$ converge in norm.
That $x\in SC(A_0\times B_0)$ tells us that each $x_{ij} \in A_0$, and hence
that the sums actually converge in $A_0$.  We apply similar
arguments with the roles and $A$ and $B$ swapped, and then by
Theorem~\ref{thm:main} again we conclude that $x\in A_0\scten B_0$.
\end{proof}

Let us make the following simple remark.  As $A\subseteq A^{**}$ and
$B\subseteq B^{**}$ we have the inclusion $A\scten B\subseteq A^{**}\scten B^{**}$.
As $A^{**}, B^{**}$ are von Neumann algebras, we can apply
Theorem~\ref{thm:sc_into_wap} to identify $A^{**}\scten B^{**}$ as a subalgebra
of $A^{**}\vnten B^{**}$.  The composition gives an inclusion $A\scten B
\rightarrow A^{**}\vnten B^{**}$, and this is nothing but the canonical
inclusion.

\subsection{Completely bounded maps}

In this section we show that $\scten$ is stable under completely bounded maps.

\begin{theorem}\label{thm:morphisms}
Let $A,B,C$ be $C^*$-algebras and let $\phi:A\rightarrow B$ be a completely
bounded map.  Then $\phi^{**}\otimes id:A^{**}\vnten C^{**} \rightarrow
B^{**}\vnten C^{**}$ is completely bounded, and restricts to a map
$\phi\scten\id: A\scten C \rightarrow B\scten C$.
\end{theorem}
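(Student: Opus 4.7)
The plan is to verify condition~(\ref{thm:main:three}) of Theorem~\ref{thm:main} for $y=(\phi^{**}\otimes\id)(x)$, with $x\in A\scten C$. The first (cb) claim is standard: $\phi^{**}:A^{**}\to B^{**}$ is normal and completely bounded of the same cb norm as $\phi$, so $\phi^{**}\otimes\id_{C^{**}}$ is well-defined and completely bounded on the von Neumann tensor products (equivalently, the preadjoint $\phi^*\otimes\id:B^*\proten C^*\to A^*\proten C^*$ is completely bounded). Fix representations $B\subseteq\mc B(H')$ and $C\subseteq\mc B(K)$ satisfying the surjectivity-of-preduals hypothesis, and view $y$ both as $(y_{ij})\in\mathbb M_J(B^{**})$ and as $(\hat y_{i'j'})\in\mathbb M_{I'}(C^{**})$ using orthonormal bases of $K$ and $H'$. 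The essential preliminary, verified on elementary tensors and extended by normality, is that $(\id\otimes\omega)\circ(\phi^{**}\otimes\id)=\phi^{**}\circ(\id\otimes\omega)$ for $\omega\in\mc B(K)_*$, and $(\mu\otimes\id)\circ(\phi^{**}\otimes\id)=\phi^*(\mu)\otimes\id$ for $\mu\in B^*$; hence $y_{ij}=\phi(x_{ij})\in B$, and $\hat y_{i'j'}=(\phi^*(\omega_{e_{j'},e_{i'}}|_B)\otimes\id)(x)\in C$ (the latter because $x\in SC(A\times C)$).

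For the $\mathbb M_J(B^{**})$-side, norm-convergence of $\sum_j y_{ij}y_{ij}^*$ and $\sum_j y_{ji}^*y_{ji}$ in $B$ follows from the fact that $\phi$ extends, with the same cb norm, to completely bounded maps between row and column operator spaces $R_J(A)\to R_J(B)$ and $C_J(A)\to C_J(B)$. Condition~(\ref{thm:main:three}) applied to $x$ places $(x_{ij})_j\in R_J(A)$ and $(x_{ji})_j\in C_J(A)$; their entrywise images under $\phi$ then lie in $R_J(B)$ and $C_J(B)$, which is exactly the required norm-convergence.

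The $\mathbb M_{I'}(C^{**})$-side is the main obstacle, since the slicing functionals there live on $B$ rather than $A$, so $\phi$ cannot be pushed through matrix entries directly. Instead I would compose three completely bounded maps. For fixed $i'$, the row $(\omega_{e_{j'},e_{i'}}|_B)_{j'}$ is the image of the contractive cb map $b\mapsto((be_{j'}|e_{i'}))_{j'}$ (the $i'$th row-extraction $B\to R_{I'}$), so it lies in $R_{I'}(B^*)$ with norm at most one. Push it through $\phi^*$, extended entrywise to a cb map $R_{I'}(B^*)\to R_{I'}(A^*)$. Finally apply the slice map $S_x:A^*\to C,\ \mu\mapsto(\mu\otimes\id)(x)$; this is cb with $\|S_x\|_{cb}\leq\|x\|$ by the standard identification of $A^{**}\vnten C^{**}$ with completely bounded slice maps $A^*\to C^{**}$, and actually takes values in $C$ because $x\in SC(A\times C)$, so it extends entrywise to $R_{I'}(A^*)\to R_{I'}(C)$. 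The composite sends the initial row to $(\hat y_{i'j'})_{j'}\in R_{I'}(C)$, so $\sum_{j'}\hat y_{i'j'}\hat y_{i'j'}^*$ converges in norm in $C$; a symmetric argument with column spaces handles $\sum_{j'}\hat y_{j'i'}^*\hat y_{j'i'}$. This completes condition~(\ref{thm:main:three}) for $y$, hence $y\in B\scten C$.
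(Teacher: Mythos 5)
Your strategy---verifying condition (\ref{thm:main:three}) of Theorem~\ref{thm:main} via row and column operator spaces---is genuinely different from the paper's proof, which instead writes $\phi(a)=T^*(a\otimes 1)S$ using the representation theorem for completely bounded maps and checks the multiplier condition (\ref{thm:main:two}) directly. Your first half is correct and clean: $y_{ij}=\phi(x_{ij})\in B$, and since condition (\ref{thm:main:three}) for $x$ places each row and column of $(x_{ij})$ in the \emph{norm-closed} row and column spaces over $A$, the uniform boundedness of the amplifications of $\phi$ to $1\times n$ and $n\times 1$ matrices does give norm convergence of $\sum_j y_{ij}y_{ij}^*$ and $\sum_j y_{ji}^*y_{ji}$ in $B$.

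The second half has a genuine gap. The row $(\omega_{e_{j'},e_{i'}}|_B)_{j'}$ does \emph{not} lie in the norm-closed row space over $B^*$ (the closure of the finitely supported rows); it lies only in the ``weak'' row space $CB(B,R_{I'})$ of rows whose finite truncations are uniformly bounded. (Take $B=\mc B_0(H')$ in its identity representation: every tail $(\omega_{e_{j'},e_{i'}})_{j'\notin J_0}$ has row norm $1$, witnessed by finite-rank partial isometries.) Entrywise application of completely bounded maps preserves only this weak row space, so your composite shows merely that the partial sums $\sum_{j'\in J_0}\hat y_{i'j'}\hat y_{i'j'}^*$ are uniformly bounded, and a bounded increasing net in a $C^*$-algebra need not converge in norm. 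The tell-tale sign is that this half of your argument uses only $x\in SC(A\times C)$ (through $\|S_x\|_{cb}\le\|x\|$ and $S_x(A^*)\subseteq C$) and never invokes $x^*x,xx^*\in SC(A\times C)$, i.e.\ the norm convergence of the $C$-side rows and columns of $x$ itself. Concretely, with $\phi=\id$ and $x=\Sigma$ the swap map on $\mc B_0(H)\otimes\mc B_0(H)$ (which lies in $SC$ but not in $\scten$), every premise you use is satisfied, yet $\sum_{j'}\hat x_{i'j'}\hat x_{i'j'}^*=\sum_{j'}\theta_{e_{j'},e_{j'}}$ converges only strongly, to $1$. So your derivation would prove a false statement from the premises it actually uses. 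The missing input is exactly what the paper supplies at the corresponding point: factoring $S\theta=RV$ with $R$ compact so as to invoke $x(\theta_1\otimes 1)\in\mc B_0(H)\otimes C$, that is, the full multiplier condition on the $C$-side of $x$. To repair your approach you would need to feed the norm convergence of $\sum_{j'}\hat x_{i'j'}\hat x_{i'j'}^*$ and $\sum_{j'}\hat x_{j'i'}^*\hat x_{j'i'}$ into the second half, not just the complete boundedness of $S_x$.
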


From the theory of operator spaces, we know that $\phi^*:B^*\rightarrow A^*$
is completely bounded and hence also $\phi^*\otimes\id:
B^*\proten C^*\rightarrow A^*\proten C^*$ is completely bounded, all with equal
norms.  Hence we do obtain $\phi^{**}\otimes\id:A^{**}\vnten C^{**} \rightarrow
B^{**}\vnten C^{**}$.

Alternatively, from the structure theory of completely
bounded maps (see \cite[Theorem~8.4]{paulsen} for example) there is a Hilbert
space $L$,
a non-degenerate $*$-representation $\pi:A\rightarrow\mc B(L)$ and
$T,S\in\mc B(H',L)$ with $\phi(a) = T^*\pi(a)S \in B\subseteq\mc B(H')$.
Here we may, and will, choose $B\subseteq\mc B(H')$ to be the universal
representation.  Let $\tilde\pi:A^{**}\rightarrow\mc B(L)$ be the normal
extension.  Then a simple calculation shows that $\phi^{**}:A^{**}\rightarrow
B^{**}$ has the form $\phi^{**}(x) = T^*\tilde\pi(x)S \in B^{**}\cong B''
\subseteq\mc B(H')$.  Let $A\subseteq\mc B(H)$ be the universal representation,
so again $A^{**} \cong A'' \subseteq\mc B(H)$.
By the structure theory for normal $*$-representations,
\cite[Theorem~5.5, Chapter~IV]{tak}, there is an auxiliary Hilbert space $L'$,
a projection $p\in A'\vnten\mc B(L') \subseteq \mc B(H\otimes L')$
and a unitary $U:p(H\otimes L')\rightarrow L$ such that
\[ \tilde\pi(x) = Up(x\otimes 1_{L'})U^* \qquad (x\in A^{**}). \]

Consequently, by enlarging the original $L$ if necessary, we may actually assume
that $L = H\otimes L'$, and that $\pi(a)=a\otimes 1$.  Thus $T,S\in\mc B(H',
H\otimes L')$ and
\[ \phi(a) = T^*(a\otimes 1)S, \qquad \phi^{**}(x) = T^*(x\otimes 1)S
\qquad (a\in A, x\in A^{**}). \]
We remark that, even in this setting, we can always choose $S,T$ with
$\|S\| \|T\| = \|\phi\|_{cb}$.
If also $C\subseteq\mc B(K)$ is the universal representation,
then $A^{**}\vnten C^{**}\subseteq \mc B(H)\vnten \mc B(K) = \mc B(H\otimes K)$
and similarly $B^{**}\vnten C^{**}\subseteq \mc B(H'\otimes K)$.  Then
\[ (\phi^{**}\otimes\id)(x) = (T\otimes 1_K)^* x_{13} (S\otimes 1_K)
\qquad (x\in A^{**}\vnten C^{**}). \]
Here we use the ``leg numbering notation'', so $x_{13}$ is $x\in\mc B(H\otimes K)$
acting on the 1st and 3rd components of $H\otimes L'\otimes K$.

\begin{proof}[Proof of Theorem~\ref{thm:morphisms}]
Let $x\in A\scten C \subseteq A^{**}\vnten C^{**}$.  We shall verify condition
(\ref{thm:main:two}) of Theorem~\ref{thm:main} for $(\phi^{**}\otimes\id)(x)$.
For $\theta\in\mc B_0(K)$, from the above discussion,
\begin{align*} (\phi^{**}\otimes\id)(x) (1\otimes\theta)
&= (T\otimes 1)^* x_{13} (S\otimes \theta)
= (T\otimes 1)^* (x(1\otimes\theta))_{13} (S\otimes 1) \\
&\in (T\otimes 1)^* \big( A\otimes 1 \otimes\mc B_0(K) \big) (S\otimes 1)
\subseteq T^*(A\otimes 1)S \otimes \mc B_0(K).
\end{align*}
However, as $\phi(a) = T^*(a\otimes 1)S \in B$ for all $a\in A$, it follows
that $(\phi^{**}\otimes\id)(x) (1\otimes\theta) \in B\otimes\mc B_0(K)$
as required.  Similarly, $(1\otimes\theta) (\phi^{**}\otimes\id)(x)
\in B\otimes\mc B_0(K)$.

Now let $\theta\in\mc B_0(H')$ and consider
\[ (\phi^{**}\otimes\id)(x) (\theta\otimes 1)
= (T\otimes 1)^* x_{13} (S\theta\otimes 1). \]
Notice that $S\theta \in \mc B_0(H',H\otimes L')$.  To simplify the proof, notice
that we are always free to replace $L'$ by $L'\otimes\ell^2(X)$ for any index
set $X$, if we also replace $T$ by $T\otimes 1_{\ell^2(X)}$ and similarly for $S$.
That is, we are free to assume that there is some isometry $V:H'\rightarrow
H\otimes L'$.  Let $R = S\theta V^* \in \mc B_0(H\otimes L')$ so that
$S\theta = RV$ and hence
\[ (\phi^{**}\otimes\id)(x) (\theta\otimes 1)
= (T\otimes 1)^* x_{13} (RV\otimes 1)
= (T\otimes 1)^* x_{13}(R\otimes 1) (V\otimes 1). \]
For $\theta_1 \in \mc B_0(H), \theta_2\in\mc B_0(L')$ we have that
\[ x_{13}(\theta_1\otimes\theta_2\otimes 1)
= (x(\theta_1\otimes 1))_{13} (1\otimes\theta_2\otimes 1)
\in \mc B_0(H) \otimes \mc B_0(L') \otimes B, \]
as $x\in A\scten B$.  As $\mc B_0(H\otimes L') \cong \mc B_0(H)\otimes\mc B_0(L')$
it follows that $x_{13}(R\otimes 1) \in \mc B_0(H) \otimes \mc B_0(L') \otimes B$
and so
\begin{align*} (\phi^{**}\otimes\id)(x) (\theta\otimes 1)
\in (T\otimes 1)^* \big(\mc B_0(H) \otimes \mc B_0(L') \otimes B\big) (V\otimes 1)
\subseteq \mc B_0(H') \otimes B,
\end{align*}
as required.  Similarly $(\theta\otimes 1) (\phi^{**}\otimes\id)(x) 
\in \mc B_0(H') \otimes B$ and so $(\phi^{**}\otimes\id)(x) \in B\scten C$
as claimed.
\end{proof}

While we stated this result only in the ``one-sided'' case, it obviously
holds for maps of the form $\phi_1 \scten \phi_2$.

\section{Weakly almost periodic functionals}\label{sec:wap}

We now come to our principle application, that of studying ``weakly almost
periodic functionals'' on locally compact quantum groups, or more generally
Hopf von Neumann algebras.  Let $(\M,\Delta)$ be a Hopf von Neumann algebra,
so $\M$ is a von Neumann algebra and $\Delta:\M\rightarrow \M\vnten \M$ is a normal
unital injective $*$-homomorphism, coassociative in the sense that
$(\Delta\otimes\id)\Delta = (\id\otimes\Delta)\Delta$.  Then the preadjoint
of $\Delta$, say $\Delta_*:\M_*\proten \M_*\rightarrow \M_*$, turns $\M_*$ into
a completely contractive Banach algebra.  We shall write $\star$ for the product in
$\M_*$ and for the module action of $\M_*$ on $\M$ (and denote the module action of
$\M$ on $\M_*$ simply by juxtaposition).

We can thus import the normal Banach algebraic definition: $\wap(\M_*)$ consists
of those $x\in \M$ such that the orbit map $\M_*\rightarrow \M; \omega\mapsto
\omega\star x = (\id\otimes\omega)\Delta(x)$ is weakly compact.  Equivalently,
$x\in\wap(\M_*)$ if and only if $\Delta(x) \in \wap(\M\vnten \M)$, using the
notation we introduced in Section~\ref{sec:for_vn_algs}.

\begin{theorem}\label{thm:defn_wap_hvna}
Let $\wap(\M,\Delta)$ be the collection of those $x\in\wap(\M_*)$ such that
$x^*x, xx^* \in \wap(\M_*)$.  Then $\wap(\M,\Delta)$ is a unital $C^*$-subalgebra of
$\M$, and any $*$-subalgebra of $\wap(\M_*)$ is contained in $\wap(\M,\Delta)$.
\end{theorem}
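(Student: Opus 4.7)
The plan is to recognise $\wap(\M,\Delta)$ as the preimage under $\Delta$ of the $C^*$-algebra $\M\scten \M$ (viewed inside $\M\vnten \M$ via $\theta_{sc}$), so that the $C^*$-structure is inherited from the results of Section~\ref{sec:for_vn_algs}.

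First, I would record the equivalence already noted: $x\in\wap(\M_*)$ iff $\Delta(x)\in\wap(\M\vnten \M)$. Since $\Delta$ is a $*$-homomorphism, $\Delta(x^*x)=\Delta(x)^*\Delta(x)$ and $\Delta(xx^*)=\Delta(x)\Delta(x)^*$, so the defining conditions of $\wap(\M,\Delta)$ translate to
\[ \Delta(x),\ \Delta(x)^*\Delta(x),\ \Delta(x)\Delta(x)^* \in \wap(\M\vnten \M). \]
By Theorem~\ref{thm:sc_into_wap}, this is precisely the condition that $\Delta(x)$ lies in the image of $\theta_{sc}:\M\scten \M\rightarrow\wap(\M\vnten \M)$. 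Hence
\[ \wap(\M,\Delta) = \Delta^{-1}\bigl(\theta_{sc}(\M\scten \M)\bigr). \]

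Next, Theorem~\ref{thm:main_app} (equivalently, Theorem~\ref{thm:sc_into_wap}) tells us that $\theta_{sc}(\M\scten \M)$ is a $C^*$-subalgebra of $\M\vnten \M$. Since $\Delta$ is a $*$-homomorphism, the preimage of a $C^*$-subalgebra is a $C^*$-subalgebra of $\M$, so $\wap(\M,\Delta)$ is a $C^*$-subalgebra. It contains $1$ because $\Delta(1)=1$ and $1\in\theta_{sc}(\M\scten \M)$ trivially (the unit lies in any $C^*$-algebraic version of $\wap$, and the three slices defining $SC(\M\times\M)$ are obviously in $\M$).

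Finally, for the maximality clause: let $B\subseteq\wap(\M_*)$ be any $*$-subalgebra of $\M$. For $x\in B$, closure under multiplication and involution gives $x^*x,xx^*\in B\subseteq\wap(\M_*)$, and so by definition $x\in\wap(\M,\Delta)$, i.e.\ $B\subseteq\wap(\M,\Delta)$. There is no real obstacle here: once Theorem~\ref{thm:sc_into_wap} is in hand, the whole statement is formal, the only content being to make sure the $*$-homomorphism $\Delta$ is used to pull back a $C^*$-structure rather than to push one forward.
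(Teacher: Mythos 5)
Your proposal is correct and follows essentially the same route as the paper: translate the defining conditions through the $*$-homomorphism $\Delta$, identify $\wap(\M,\Delta)$ as $\Delta^{-1}$ of the image of $\M\scten\M$ using Theorem~\ref{thm:sc_into_wap}, and note unitality from $\Delta(1)=1\otimes 1$ and maximality from closure of a $*$-subalgebra under $x\mapsto x^*x,\,xx^*$. The paper's own proof is just a terser version of exactly this argument.
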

\begin{proof}
As $\Delta$ is a $*$-homomorphism, $x\in\wap(\M,\Delta)$ if and only if
$\Delta(x) \in \wap(\M\vnten \M)$.  The result now follows from
Theorem~\ref{thm:sc_into_wap}.  As $\Delta(1)=1\otimes 1$ clearly
$\wap(\M,\Delta)$ is unital.
\end{proof}

Notice that, by definition, $x\in\wap(\M,\Delta)$ if and only if
$\Delta(x) \in \M\scten \M \subseteq \M\vnten \M$.

Let us recall some of the theory of Arens products on Banach algebras, for example
see \cite[Section~3]{dawsdis}, \cite[Section~2]{dawsdba}, and references therein.
Let $A$ be a Banach algebra, $X\subseteq A^*$ a closed $A$-submodule, and
suppose that $X$ is ``introverted'', meaning that if we define
\[ \ip{\Phi\cdot \mu}{a} = \ip{\Phi}{\mu\cdot a}, \quad
\ip{\mu\cdot\Phi}{a} = \ip{\Phi}{a\cdot\mu}
\qquad (a\in A,\mu\in A^*,\Phi\in A^{**}) \]
then $\Phi\cdot \mu, \mu\cdot\Phi\in X$ for all $\mu\in X, \Phi\in A^{**}$.
In this case, we can define products (the first and second Arens products)
on $X^*$ by
\[ \ip{\Phi\aone\Psi}{\mu} = \ip{\Phi}{\Psi\cdot\mu},\quad
\ip{\Phi\atwo\Psi}{\mu} = \ip{\Psi}{\mu\cdot\Phi}
\qquad (\Phi,\Psi\in X^*,\mu\in X). \]
If $X\subseteq\wap(A)$ then automatically $X$ is introverted, \cite[Lemma~1.2]{ll}.
In fact, for either $\aone$ or $\atwo$, $X^*$ becomes a ``dual Banach algebra''
(that is, the product is separately weak$^*$-continuous) if and only if
$X\subseteq\wap(A)$, see \cite[Proposition~2.4]{dawsdba}.

When $A=\M_*$ we have, using the notation of  Section~\ref{sec:for_vn_algs}, that
\[ \ip{\mu\aone\lambda}{x} = \ip{\mu\otimes_\aone\lambda}{\Delta(x)},
\quad \ip{\mu\atwo\lambda}{x} = \ip{\mu\otimes_\atwo\lambda}{\Delta(x)}
\qquad (x\in X\subseteq \M, \mu,\lambda\in X^*). \]
We remark that, by the Hahn-Banach Theorem, it does not matter if we work
with $X^*$ or $\M^*$.  Then Lemma~\ref{lem:wap_char} immediately shows that
if $X\subseteq\wap(\M_*)$, then $\aone=\atwo$ on $X^*$.

\begin{theorem}\label{thm:wap_submod}
For any Hopf von Neumann algebra, $\wap(\M,\Delta)$ is an $\M_*$-submodule of $\M$.
As such, $\wap(\M,\Delta)^*$ becomes a dual Banach algebra for either Arens
product (which agree).
\end{theorem}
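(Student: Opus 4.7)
The plan is to establish the $\M_*$-submodule property first; the dual Banach algebra statement then falls out of the general Arens product machinery recalled just before the theorem.  By Theorem~\ref{thm:defn_wap_hvna}, and the remark immediately following it, an element $y\in\M$ lies in $\wap(\M,\Delta)$ exactly when $\Delta(y)\in \M\scten \M$.  So, fixing $x\in\wap(\M,\Delta)$ and $\omega\in\M_*$, I only need to verify that $\Delta(\omega\star x)\in\M\scten\M$ and, symmetrically, $\Delta(x\star\omega)\in\M\scten\M$.

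Using that $\Delta$ is a normal $*$-homomorphism, together with coassociativity (both the map $y\mapsto\Delta((\id\otimes\omega)(y))$ and the map $y\mapsto(\id\otimes\id\otimes\omega)(\id\otimes\Delta)(y)$ are weak${}^*$-continuous on $\M\vnten\M$, so it is enough to check agreement on elementary tensors), one rewrites
\[
\Delta(\omega\star x)
= \Delta\bigl((\id\otimes\omega)\Delta(x)\bigr)
= (\id\otimes\id\otimes\omega)(\id\otimes\Delta)\Delta(x).
\]
Now I invoke Theorem~\ref{thm:morphisms} twice, in the two-sided form recorded in the remark after its proof.  Since $\Delta:\M\to\M\vnten\M$ is completely bounded, $\id\scten\Delta$ maps $\M\scten\M$ into $\M\scten(\M\vnten\M)$, so $(\id\otimes\Delta)\Delta(x)\in\M\scten(\M\vnten\M)$.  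The slice map $\id\otimes\omega:\M\vnten\M\to\M$ is completely contractive, so $\id\scten(\id\otimes\omega)$ carries $\M\scten(\M\vnten\M)$ back into $\M\scten\M$.  Composing yields $\Delta(\omega\star x)\in\M\scten\M$, i.e.\ $\omega\star x\in\wap(\M,\Delta)$.  The argument for $x\star\omega$ is entirely symmetric, using $\omega\otimes\id$ and $(\Delta\otimes\id)\Delta(x)$ in place of the above.

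With the submodule property in hand, let $X=\wap(\M,\Delta)\subseteq\wap(\M_*)$.  The cited Lemma~1.2 of \cite{ll} makes $X$ automatically introverted, and \cite[Proposition~2.4]{dawsdba} then says that $X^*$ is a dual Banach algebra under either Arens product.  Finally, for every $x\in X$ the element $\Delta(x)$ lies in $\wap(\M\vnten\M)$, so Lemma~\ref{lem:wap_char} gives $\ip{\mu\otimes_\aone\lambda}{\Delta(x)}=\ip{\mu\otimes_\atwo\lambda}{\Delta(x)}$ for all $\mu,\lambda$; by the formulas for $\aone,\atwo$ displayed immediately before the theorem, this is precisely the identity $\mu\aone\lambda=\mu\atwo\lambda$ in $X^*$.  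The main technical point is the clean two-fold application of Theorem~\ref{thm:morphisms}: one must keep track of which tensor slot is being sliced at each stage and stay within the $C^*$-algebra setting of that theorem; everything else is bookkeeping.
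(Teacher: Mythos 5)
Your proposal is correct and follows essentially the same route as the paper: rewrite $\Delta(\omega\star x)=(\id\otimes\id\otimes\omega)(\id\otimes\Delta)\Delta(x)$ via coassociativity, push $\Delta(x)\in\M\scten\M$ through Theorem~\ref{thm:morphisms}, and then quote the Arens-product machinery together with Lemma~\ref{lem:wap_char} for the dual Banach algebra statement. The only differences are cosmetic: the paper applies Theorem~\ref{thm:morphisms} once to the single completely bounded map $\phi=(\id\otimes\omega)\Delta\colon\M\to\M$ rather than twice through the intermediate space $\M\scten(\M\vnten\M)$, and the ``bookkeeping'' you defer --- checking that the element of $\M\scten\M$ so produced really maps to $\Delta(\omega\star x)$ under the embedding into $\M\vnten\M$ --- is carried out explicitly there by pairing against $\kappa(\omega_1)\otimes\kappa(\omega_2)$ and using that $\phi$ is normal with preadjoint $\omega_2\mapsto\omega_2\star\omega$.
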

\begin{proof}
Let $x\in\wap(\M,\Delta)$, so by definition, $\Delta(x)\in \M\scten \M
\subseteq \M\vnten \M$.  To be careful, let $y \in \M\scten \M$ be the image of
$\Delta(x)$.  Let $\omega\in \M_*$ and consider
\begin{align*} \Delta\big( \omega\star x \big)
&= \Delta\big( (\id\otimes\omega)\Delta(x) \big)
= (\id\otimes\id\otimes\omega)(\Delta\otimes\id)\Delta(x) \\
&= (\id\otimes\id\otimes\omega)(\id\otimes\Delta)\Delta(x)
= (\id\otimes\phi)\Delta(x),
\end{align*}
where $\phi:\M\rightarrow \M$ is the (normal) completely bounded map
$z \mapsto (\id\otimes\omega)\Delta(z)$.  Let $y' = (\id\scten\phi)(y)
\in \M\scten \M$ thanks to Theorem~\ref{thm:morphisms}.  For $\omega_1,\omega_2
\in \M_*$ and with $\kappa = \kappa_{\M_*}:\M_*\rightarrow \M^*$, we have that
\begin{align*} \ip{y'}{\kappa(\omega_1)\otimes\kappa(\omega_2)}
&= \ip{y}{\kappa(\omega_1)\otimes\phi^*\kappa(\omega_2)}
= \ip{y}{\kappa(\omega_1)\otimes\kappa\phi_*(\omega_2)} \\
&= \ip{\Delta(x)}{\omega_1 \otimes \phi_*(\omega_2)}.
\end{align*}
Here we used the embedding of $\M\scten \M$ into $\M\vnten \M$, the definition
of $\id\scten\phi$ and that $\phi$ is normal with preadjoint
$\phi_*:\M_*\rightarrow \M_*; \omega_2 \mapsto \omega_2\star\omega$.  Hence
\[ \ip{y'}{\kappa(\omega_1)\otimes\kappa(\omega_2)}
= \ip{x}{\omega_1 \star (\omega_2\star\omega)}
= \ip{\Delta(\omega\star x)}{\omega_1\otimes\omega_2}. \]
Thus the image of $y'\in \M\scten \M$ in $\M\vnten \M$ is simply
$\Delta(\omega\star x)$ and so $\omega\star x \in \wap(\M,\Delta)$.

Analogously, to show that $x\star\omega\in\wap(\M,\Delta)$ we show that
$\Delta(x\star\omega)\in \M\scten \M$.  As $\Delta(x\star\omega)
= (\phi'\otimes\id)\Delta(x)$ where $\phi'(z) = (\omega\otimes\id)\Delta(z)$
this will follow in the same way.
\end{proof}

\subsection{In the language of compactifications}

When $G$ is a locally compact group, $\wap(G) = \wap(L^1(G)) \subseteq L^\infty(G)$
is a commutative $C^*$-algebra with character space $G^{\wap}$ which becomes
a compact semitopological semigroup.  In fact, $G^{\wap}$ is ``maximal'' in the
sense that if $S$ is a compact semitopological semigroup and $\phi:G\rightarrow S$
a continuous (semi)group homomorphism, then there is a semigroup homomorphism
$\phi_0:G^{\wap}\rightarrow S$ factoring $\phi$.

We can turn this into a statement about algebras and coproducts in the usual
way (compare \cite{daws, soltan}).  However, in this setting, we would need a 
good notion of
a ``non-commutative'' or ``quantum'' semitopological semigroup.  The following is
now an obvious, but tentative, definition.

\begin{definition}
A compact quantum semitopological semigroup is a pair $(A,\Delta_A)$ where
$A$ is a unital $C^*$-algebra and $\Delta_A:A\rightarrow A\scten A$ is a
$*$-homomorphism, ``coassociative'' in the sense that the induced product on
$A^*$ is associative.
\end{definition}

As $A\scten A \subseteq A^{**}\vnten A^{**}$ by definition, the product on $A^*$ is
simply
\[ \ip{\mu\star\lambda}{a} = \ip{\Delta_A(a)}{\mu\otimes\lambda}
\qquad (a\in A, \mu,\lambda\in A^*). \]
The ``$C^*$-Eberlein algebras'' explored in \cite{dawsdas} fit into this framework,
thanks to \cite[Definition~3.6]{dawsdas} and \cite[Section~3.3]{dawsdas}.

\begin{theorem}\label{thm:wap_cqss}
Let $(\M,\Delta)$ be a Hopf von Neumann algebra and let $\wap = \wap(\M,\Delta)$
be as in Theorem~\ref{thm:defn_wap_hvna}.  Viewing $\wap\scten\wap$ as a subspace
of $\M\scten \M$, which in turn is a subspace of $\M\vnten \M$, we have that
$\Delta$ restricts to a map $\Delta_{\wap} : \wap \rightarrow \wap \scten \wap$.
\end{theorem}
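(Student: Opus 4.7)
The plan is to apply Theorem~\ref{thm:inclusion} to the inclusion of $C^*$-algebras $\wap \subseteq \M$ and then exploit weak compactness directly. For $x \in \wap$, let $y \in \M \scten \M \subseteq \M^{**} \vnten \M^{**}$ be the element corresponding to $\Delta(x) \in \M \vnten \M$ via Proposition~\ref{prop:wap_iso_sc}, noting that $y \in \M \scten \M$ holds by the very definition of $\wap(\M,\Delta)$. By Theorem~\ref{thm:inclusion}, to conclude $y \in \wap \scten \wap$ it suffices to show that both slices $(\mu \otimes \id)(y)$ and $(\id \otimes \lambda)(y)$ land in $\wap$ for every $\mu, \lambda \in \M^*$.

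Consider the orbit map $R_x : \M_* \to \M$ given by $R_x(\omega) = x \star \omega = (\omega \otimes \id)\Delta(x)$. Since $x \in \wap(\M_*)$, the map $R_x$ is weakly compact, and by Theorem~\ref{thm:wap_submod} its image is contained in $\wap$. Because $\wap$ is norm-closed in $\M$, the corestriction $R_x : \M_* \to \wap$ is still weakly compact, so the standard characterization of weak compactness gives $R_x^{**}(\M^*) \subseteq \wap$. Unwinding definitions, for $\mu \in \M^*$ and $\tau \in \M_*$ the formula $R_x^*(\kappa(\tau)) = \tau \star x$ yields
\[ \ip{R_x^{**}(\mu)}{\tau} = \ip{\mu}{\tau \star x}, \]
which by Proposition~\ref{prop:wap_iso_sc} coincides with $\ip{(\mu \otimes \id)(y)}{\tau}$. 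Hence $(\mu \otimes \id)(y) = R_x^{**}(\mu) \in \wap$. The symmetric argument using $L_x : \M_* \to \wap$, $L_x(\omega) = \omega \star x$, gives $(\id \otimes \lambda)(y) = L_x^{**}(\lambda) \in \wap$.

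The argument is short once one spots the identification of the Arens-style slices with biadjoints of the orbit maps: the substantive ingredients are then only Theorem~\ref{thm:wap_submod} (to corestrict $R_x,L_x$ to $\wap$) and the classical fact that weakly compact maps have biadjoints landing in the target. The main obstacle, in the sense of setup, is to match the slicing condition of Theorem~\ref{thm:inclusion} (which genuinely involves $\M^*$, not just $\M_*$) with the biadjoint formulation; it is precisely Proposition~\ref{prop:wap_iso_sc} that performs this translation and makes the weak-compactness argument applicable.
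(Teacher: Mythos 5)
Your proof is correct and follows essentially the same route as the paper's: reduce via Theorem~\ref{thm:inclusion} to showing the slices of $y$ by elements of $\M^*$ land in $\wap$, use Theorem~\ref{thm:wap_submod} to see the orbit maps take values in $\wap$, and invoke weak compactness of those orbit maps. The only cosmetic difference is that you phrase the last step through the biadjoint characterization of weak compactness, whereas the paper passes to a weakly convergent subnet of $(x\star\omega_\alpha)$ and uses that $\wap$ is norm-closed, hence weakly closed; these are the same standard fact in two guises.
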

\begin{proof}
Let $x\in\wap$ and let $y$ be the image of $\Delta(x)$ in $\M\scten \M$.
By Theorem~\ref{thm:inclusion} we need to show that $y\in SC(\wap\times\wap)$,
that is, that $(\mu\otimes\id)y, (\id\otimes\mu)y \in \wap$ for all $\mu\in \M^*$.

Let $\mu\in \M^*$ and choose a bounded net $(\omega_\alpha)$ in $\M_*$ converging
weak$^*$ to $\mu$.  For $\omega\in \M_*$ we have that
\[ \ip{(\mu\otimes\id)y}{\omega} = \ip{\mu}{(\id\otimes\omega)\Delta(x)}
= \lim_\alpha \ip{x}{\omega_\alpha\star\omega}
= \lim_\alpha \ip{x\star\omega_\alpha}{\omega}. \]
As $x\in\wap(\M_*)$ the map $\M_*\rightarrow \M; \tau\mapsto x\star\tau$ is
weakly compact, and so we may assume that $(x\star\omega_\alpha)$ converges
weakly.  By Theorem~\ref{thm:wap_submod}, this net is contained in $\wap$
which is a norm closed subspace, hence weakly closed.  We conclude that
$(x\star\omega_\alpha)$ converges to a member of $\wap$ and hence
$(\mu\otimes\id)y\in\wap$.  Analogously, $(\id\otimes\mu)y\in\wap$, as required.
\end{proof}

Combining this result with Theorem~\ref{thm:wap_submod} we see that
$(\wap,\Delta_{\wap})$ is a compact quantum semitopological semigroup.

We can now show that $\wap(\M,\Delta)$ has the required universal property to
be a ``compactification''.  Given $(A,\Delta_A)$ a compact quantum semitopological
semigroup, let $\theta:A\rightarrow \M$ be a $*$-homomorphism, and let $\tilde\theta:
A^{**}\rightarrow \M$ be the normal extension.  As $\Delta_A$ maps into $A\scten A
\subseteq A^{**}\vnten A^{**}$, the map $(\tilde\theta\otimes\tilde\theta)\Delta_A:
A\rightarrow \M\vnten \M$ makes sense.  If $(\tilde\theta\otimes\tilde\theta)\Delta_A
= \Delta\theta$ then we shall say that $\theta$ is a \emph{morphism}.  This is
equivalent to the restriction of $\theta^*$ to $\M_*$ being a Banach algebra
homomorphism $\M_*\rightarrow A^*$.

\begin{theorem}\label{thm:cqss_wap_compact}
Let $(\M,\Delta)$ be a Hopf von Neumann algebra.
Let $(A,\Delta_A)$ be a compact quantum semitopological semigroup and let
$\theta:A\rightarrow \M$ be a morphism.  Then $\theta(A) \subseteq \wap(\M,\Delta)$,
and $\wap(\M,\Delta)$ is the union of the images of all such $\theta$.
Furthermore, there is a $*$-homomorphism, intertwining the coproducts,
$\theta_0:A\rightarrow\wap$ which factors $\theta$.
\end{theorem}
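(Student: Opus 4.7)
The plan rests on one bookkeeping identity: for any $*$-homomorphism $\phi:A\to\M$ with $\M$ a von Neumann algebra, the normal extension $\tilde\phi$ factors as $\kappa^*\circ\phi^{**}$, where $\kappa^*:\M^{**}\to\M$ is the projection from Section~\ref{sec:for_vn_algs}. Since $\phi\scten\phi$ is the restriction of $\phi^{**}\otimes\phi^{**}$ (Theorem~\ref{thm:morphisms}) and $\theta_{sc}$ is the restriction of $\kappa^*\otimes\kappa^*$, it follows that
\[
\theta_{sc}\circ(\phi\scten\phi) \;=\; (\tilde\phi\otimes\tilde\phi)\big|_{A\scten A} \,:\, A\scten A \to \M\vnten\M.
\]
Each of the three assertions then reduces to a short diagram chase using this identity together with functoriality of $\scten$.

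For the first assertion, fix $a\in A$ and apply Theorem~\ref{thm:morphisms} to obtain $(\theta\scten\theta)(\Delta_A(a))\in\M\scten\M$. Pushing this through $\theta_{sc}$ and using the displayed identity together with the morphism hypothesis $(\tilde\theta\otimes\tilde\theta)\Delta_A=\Delta\theta$ yields $\theta_{sc}((\theta\scten\theta)(\Delta_A(a)))=\Delta\theta(a)$. Hence $\Delta\theta(a)$ lies in $\theta_{sc}(\M\scten\M)$, which by the remark immediately after Theorem~\ref{thm:defn_wap_hvna} is exactly the condition $\theta(a)\in\wap(\M,\Delta)$.

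For the second assertion, Theorems~\ref{thm:wap_submod} and~\ref{thm:wap_cqss} show that $(\wap(\M,\Delta),\Delta_\wap)$ is itself a compact quantum semitopological semigroup. The inclusion $\iota:\wap(\M,\Delta)\to\M$ is a morphism: by construction of $\Delta_\wap$, the composite $\theta_{sc}\circ(\iota\scten\iota)$ sends $\Delta_\wap(x)$ to $\Delta(x)$, so the displayed identity yields $(\tilde\iota\otimes\tilde\iota)\Delta_\wap=\Delta\iota$. The image of $\iota$ is all of $\wap(\M,\Delta)$, so $\wap(\M,\Delta)$ lies in the union; the reverse containment is the first assertion.

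For the third, the corestriction $\theta_0:A\to\wap(\M,\Delta)$ of $\theta$ is well defined by the first assertion, is a $*$-homomorphism, and satisfies $\theta=\iota\circ\theta_0$. To show $\theta_0$ intertwines the coproducts, observe that $\iota\scten\iota:\wap\scten\wap\to\M\scten\M$ is an isometric embedding (the chain of inclusions preceding Theorem~\ref{thm:inclusion}) and that functoriality gives $(\iota\scten\iota)\circ(\theta_0\scten\theta_0)=\theta\scten\theta$. Pushing the two sides of the desired identity $(\theta_0\scten\theta_0)\Delta_A(a)=\Delta_\wap(\theta_0(a))$ into $\M\vnten\M$ via $\theta_{sc}\circ(\iota\scten\iota)$, both become $\Delta\theta(a)=\Delta(\iota\theta_0(a))$, by the first assertion on one side and the morphism property of $\iota$ from (2) on the other. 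Injectivity then forces equality already in $\wap\scten\wap$. The only real obstacle is verifying the commuting square identified at the outset; once that is in place, the proof is essentially formal.
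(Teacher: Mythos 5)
Your proof is correct and takes essentially the same approach as the paper: the key identity $\theta_{sc}\circ(\theta\scten\theta)=(\tilde\theta\otimes\tilde\theta)|_{A\scten A}$ is precisely the displayed computation in the paper's proof, and the handling of the second and third assertions via the corestriction $\theta_0$ and the inclusion of $(\wap,\Delta_{\wap})$ matches the paper's (terser) argument. You merely make explicit some routine verifications the paper leaves to the reader.
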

\begin{proof}
Notice that we simply define $\theta_0$ to be the corestriction of $\theta$,
assuming that $\theta$ does map into $\wap$, and that as $(\wap,\Delta_{\wap})$
is itself a compact quantum semitopological semigroup, the inclusion map shows that
$\wap$ is the union of images of suitable $\theta$.

So it remains to show that for $a\in A$, we do have that $\theta(a)\in\wap$,
that is, that $\Delta(\theta(a)) \in \M\scten \M$.  However, $\Delta(\theta(a))
= (\tilde\theta\otimes\tilde\theta)\Delta_A(a)$.  By Theorem~\ref{thm:morphisms}
we have that $x = (\theta\scten\theta)\Delta_A(a) \in \M\scten \M$.  Let $y$ be
the image of $x$ in $\M\vnten \M$.  Let $\kappa=\kappa_{\M_*}:\M_*\rightarrow \M^*$
and recall that actually $\tilde\theta = (\theta^*\kappa)^* = \kappa^*\theta^{**}$.
Then, by definition of the various maps,
\begin{align*}
\ip{y}{\omega_1\otimes\omega_2} &=
\ip{x}{\kappa(\omega_1)\otimes\kappa(\omega_2)}
= \ip{(\theta^{**}\otimes\theta^{**})\Delta_A(a)}{\kappa(\omega_1)
   \otimes\kappa(\omega_2)}
= \ip{(\tilde\theta\otimes\tilde\theta)\Delta_A(a)}{\omega_1\otimes\omega_2}.
\end{align*}
Thus, as required, $\Delta(\theta(a))$ is in the image of $\M\scten \M$ in
$\M\vnten \M$.
\end{proof}

\section{Continuous analogues}\label{sec:cty}

So far we have worked with Hopf von Neumann algebras, non-commutative generalisations
of measure spaces.  By analogy, there should be ``continuous'' version of the
theory, namely one which works with $C^*$-bialgebras.  Recall that a $C^*$-bialgebra
is a pair $(A,\Delta_A)$ where $A$ is a $C^*$-algebra and $\Delta_A:A\rightarrow
M(A\otimes A)$ is a non-degenerate $*$-homomorphism which is coassociative.

In this section, we wish to treat abstract $C^*$-bialgebras, but also those
which arise from locally compact quantum groups, where we have more structure,
and in particular good interaction with the Hopf von Neumann theory.  We hence
proceed with a little generality.

Fix a $C^*$-bialgebra $(A,\Delta_A)$.  Let $(\M,\Delta)$ be a Hopf von Neumann
algebra and suppose we have an injective $*$-homomorphism $\theta:A\rightarrow \M$
which is non-degenerate in the sense that if $(e_\alpha)$ is a bounded approximate
identity for $A$ then $(\theta(e_\alpha))$ converges weak$^*$ to $1$ in $\M$.
Then $\theta$ extends to $\tilde\theta:M(A)\rightarrow \M$ which is also injective,
identifying $M(A)$ with $\{ x\in \M : x\theta(a), \theta(a)x \in \theta(A)
\ (a\in A) \}$.  We also denote by $\tilde\theta$ the normal extension $A^{**}
\rightarrow \M$.  These maps are compatible in the sense that if we view $M(A)$
as being $\{ x\in A^{**} : xA,Ax\subseteq A\}$ then $\tilde\theta$ restricted to
$M(A)$ agrees with the extension of $\theta$ from $A$ to $M(A)$.

We shall then make the further assumption that $(\tilde\theta\otimes\tilde\theta)
\Delta_A(a) = \Delta_\M(\theta(a))$ for all $a\in A$; this implies the same for
all $a\in M(A)$.  Again, we can either interpret this formula as meaning
\[ (\theta\otimes\theta)\big( \Delta_A(a)(b\otimes c) \big)
= \Delta_\M(\theta(a)) (\theta(b)\otimes\theta(c))
\qquad (a,b,c\in A), \]
or in terms of extensions to biduals, that is, including $M(A\otimes A)$ into
$A^{**}\vnten A^{**}$.

\begin{itemize}
\item
If $(A,\Delta_A)$ is an abstract $C^*$-bialgebra, then, for example, we may take
$\M=A^{**}$, and then form $\Delta_\M:A^{**}\rightarrow A^{**}\vnten A^{**}$ by
first considering $\Delta_A:A\rightarrow M(A\otimes A)\subseteq A^{**}\vnten A^{**}$
and then forming the normal extension.  Then $\theta=\kappa_A:A\rightarrow A^{**}$ is
the canonical map.
\item
If $A=C_0(\G)$ arises from a locally compact quantum group, then the most natural
choice is to take $\M=L^\infty(\G)$ with its usual coproduct.  Then $\theta$ is
the inclusion.
\end{itemize}

Notice that if $\M,\Delta_\M,\theta$ is any choice, then we always have a quotient
map $\phi:A^{**}\rightarrow \M$ such that $\phi\kappa_A = \theta$.  Indeed,
if $\M\subseteq\mc B(H)$ then $\phi$ will map onto $\theta(A)'' = \M$ by our
assumption on $\theta$, compare \cite[Section~2, Chapter~III]{tak}.  Then $\phi$
will intertwine the coproducts $\Delta_{A^{**}}$ and $\Delta_\M$.  We shall verify,
as we go along, that whether we work in $A^{**}$ or in $\M$ is unimportant.

As motivation for the following, consider a locally compact group $G$ and set
$A=C_0(G), \M=L^\infty(G)$ with $\theta$ the inclusion.  We wish to know when
$f\in C_b(G) = M(C_0(G))$ is in $\wap(G)$.  One abstract approach would be to
try to embed $M(A\otimes A)$ into $M(A)^{**}\vnten M(A)^{**}$ (so as to ask
when we land in $M(A)\scten M(A)$).  However, the comment after
Lemma~\ref{lem:slice_only_ma} shows that this cannot work.  Instead, we map
the problem into $\M$, and work with $\wap(\M,\Delta)$.  Our task then is to show
that this is independent of the choice of $\M$ (which it is!)

\begin{lemma}\label{lem:ma_mstar_submod}
The image of $M(A)$ in $\M$ is an $\M_*$-submodule.
\end{lemma}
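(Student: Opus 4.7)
The plan is to verify the characterization $M(A) = \{ y \in \M : y\theta(a), \theta(a)y \in \theta(A)\ (a \in A)\}$ directly for the image of $M(A)$ under the $\M_*$-action. Thus, given $m \in M(A)$ (identified with $\tilde\theta(m) \in \M$) and $\omega \in \M_*$, I must show that $\theta(a)(\omega \star \tilde\theta(m))$ and $(\omega \star \tilde\theta(m))\theta(a)$ lie in $\theta(A)$ for each $a \in A$, and similarly with $\omega \star \tilde\theta(m)$ replaced by $\tilde\theta(m) \star \omega$.

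The key input is the coproduct compatibility $\Delta_\M(\tilde\theta(m)) = (\tilde\theta \otimes \tilde\theta)\Delta_A(m)$ combined with the multiplier condition $\Delta_A(m) \in M(A \otimes A)$. The latter gives $(a \otimes b)\Delta_A(m), \Delta_A(m)(a \otimes b) \in A \otimes A$ for all $a, b \in A$, and applying the $*$-homomorphism $\tilde\theta \otimes \tilde\theta$ yields
\[ (\theta(a) \otimes \theta(b))\Delta_\M(\tilde\theta(m)),\ \Delta_\M(\tilde\theta(m))(\theta(a) \otimes \theta(b)) \in \theta(A) \otimes \theta(A). \]
To pass from this two-legged compression to the one-legged product that appears in the module action, I introduce a bounded approximate identity $(e_\alpha)$ for $A$. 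By non-degeneracy, $\theta(e_\alpha) \to 1$ weak$^*$ in $\M$; since these are positive contractions with positive limit, a standard argument in any faithful normal representation upgrades this to strong convergence, and thence to norm convergence $\omega \theta(e_\alpha) \to \omega$ and $\theta(e_\alpha) \omega \to \omega$ in $\M_*$ for both natural $\M$-module actions on $\M_*$.

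To close, I write
\[ (\omega \star \tilde\theta(m))\theta(a) = (\id \otimes \omega)\bigl(\Delta_\M(\tilde\theta(m))(\theta(a) \otimes 1)\bigr), \]
observe that replacing $\theta(a) \otimes 1$ by $\theta(a) \otimes \theta(e_\alpha)$ is equivalent to replacing the slicing functional $\omega$ by $\omega \theta(e_\alpha)$, and conclude by norm continuity that the approximants
\[ (\id \otimes \omega)\bigl(\Delta_\M(\tilde\theta(m))(\theta(a) \otimes \theta(e_\alpha))\bigr) \]
tend in norm to $(\omega \star \tilde\theta(m))\theta(a)$. By the previous step each approximant lies in $(\id \otimes \omega)(\theta(A) \otimes \theta(A)) \subseteq \theta(A)$, so norm-closedness of $\theta(A)$ places the limit in $\theta(A)$; the other three identities are handled by mirror-image computations.

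The main obstacle is simply that $\Delta_\M(\tilde\theta(m))(\theta(a) \otimes 1)$ is a priori only an element of $\M \vnten \M$, with no reason to sit in $\theta(A) \otimes \M$; only the two-sided product with $\theta(A) \otimes \theta(A)$ is controlled by the multiplier hypothesis. The role of the approximate identity is to absorb the extra ``$1$'' asymptotically, and it is crucial that the convergence $\omega \theta(e_\alpha) \to \omega$ is in norm rather than merely weak$^*$, since $\theta(A)$ is norm-closed but generally weak$^*$-dense in $\M$.
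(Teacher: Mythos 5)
Your proof is correct, and it verifies the same criterion as the paper's: an element $y\in\M$ lies in the image of $M(A)$ precisely when $y\theta(a),\theta(a)y\in\theta(A)$ for all $a\in A$, and the only control available on $\Delta_\M(\tilde\theta(m))$ is the two-legged multiplier condition $\Delta_A(m)(a\otimes b),(a\otimes b)\Delta_A(m)\in A\otimes A$. Where you differ is in how you dispose of the unwanted $1$ in the leg being sliced. The paper invokes Cohen factorisation to write $\omega=\theta(b)\omega'$ exactly, after which the computation closes in one line, with $d=(\theta^*(\omega')\otimes\id)(\tilde\Delta_A(x)(b\otimes c))\in A$ and no limiting process at all. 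You instead insert a positive contractive approximate identity $(e_\alpha)$, which obliges you to justify two extra analytic steps: that weak$^*$ convergence $\theta(e_\alpha)\to 1$ of positive contractions upgrades to strong$^*$ convergence (this follows from $(1-\theta(e_\alpha))^2\le 1-\theta(e_\alpha)$), and that a bounded strong$^*$-convergent net acts norm-continuously on $\M_*$, so that $\omega\theta(e_\alpha)\to\omega$ in norm (this follows from an $\ell^2$-representation of $\omega$ and dominated convergence). Both steps are standard and correct, and norm-closedness of $\theta(A)$ then finishes the argument exactly as you say. The trade-off is that your route is self-contained modulo these von Neumann algebra facts, whereas the paper outsources the same approximation to Cohen's theorem and obtains an exact identity; neither is more general than the other, since Cohen factorisation is available precisely because $\theta$ is non-degenerate, which is also the hypothesis your limit argument consumes.
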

\begin{proof}
For $x\in M(A)$ and $\omega\in \M_*$ we will show that $\tilde\theta(x)\star
\omega \in M(A)\subseteq \M$.
As $\theta$ is non-degenerate, by Cohen-Factorisation
(see \cite[Section~11]{bd} or \cite[Proposition~A2]{mnw}, for example)
there exists $b\in A,
\omega'\in \M_*$ with $\omega = \theta(b)\omega'$.  Then, for $c\in A,
\omega''\in \M_*$,
\begin{align*} \ip{(\omega\otimes\id)\Delta_\M(\tilde\theta(x)) \theta(c)}{\omega''}
&= \ip{\tilde\theta(x)}{\theta(b)\omega' \star \theta(c)\omega''}
= \ip{b\theta^*(\omega') \star c\theta^*(\omega'')}{x} \\
&= \ip{\theta^*(\omega') \otimes \theta^*(\omega'')}{\tilde\Delta_A(x)(b\otimes c)}
\\ &= \ip{\theta(d)}{\omega''},
\end{align*}
where $d = (\theta^*(\omega')\otimes\id)(\tilde\Delta_A(x)(b\otimes c))
\in A$ as $\tilde\Delta_A(x)(b\otimes c) \in A\otimes A$.
Similar remarks apply to slicing on the other side.
\end{proof}

\begin{lemma}\label{lem:slice_only_ma}
Let $\mu\in M(A)^*$ and let $\mu_0\in \M^*$ be a Hahn-Banach extension
(that is, $\mu_0\circ\tilde\theta = \mu$).  For $x\in M(A)$, both
$(\id\otimes\mu_0)\Delta_\M(\tilde\theta(x))$ and
$(\mu_0\otimes\id)\Delta_\M(\tilde\theta(x))$ depend only on $\mu$.
\end{lemma}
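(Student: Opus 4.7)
The plan is to test the putative slice in $\mathsf{M}$ against an arbitrary element of $\mathsf{M}_*$ and observe that, after one unfolding of the definitions, the expression already records only the values of $\mu_0$ on the image $\tilde\theta(M(A))$, which is exactly $\mu$.

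Concretely, fix $\omega \in \mathsf{M}_*$. By the definition of the slice by a dual functional recalled at the start of Section~\ref{sec:for_vn_algs},
\[
\ip{(\id\otimes\mu_0)\Delta_\mathsf{M}(\tilde\theta(x))}{\omega}
= \ip{\mu_0}{(\omega\otimes\id)\Delta_\mathsf{M}(\tilde\theta(x))}
= \ip{\mu_0}{\tilde\theta(x)\star\omega},
\]
and symmetrically $\ip{(\mu_0\otimes\id)\Delta_\mathsf{M}(\tilde\theta(x))}{\omega} = \ip{\mu_0}{\omega\star\tilde\theta(x)}$. By Lemma~\ref{lem:ma_mstar_submod} the image of $M(A)$ in $\mathsf{M}$ is a two-sided $\mathsf{M}_*$-submodule, so $\tilde\theta(x)\star\omega$ lies in $\tilde\theta(M(A))$. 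Since $\tilde\theta$ is injective, there is a unique $y\in M(A)$ with $\tilde\theta(y)=\tilde\theta(x)\star\omega$, and then
\[
\ip{\mu_0}{\tilde\theta(x)\star\omega} = \ip{\mu_0}{\tilde\theta(y)} = \ip{\mu_0\circ\tilde\theta}{y} = \ip{\mu}{y},
\]
which depends on $\mu_0$ only through $\mu$. Since $\mathsf{M}_*$ separates points of $\mathsf{M}$, this determines the element $(\id\otimes\mu_0)\Delta_\mathsf{M}(\tilde\theta(x)) \in \mathsf{M}$ uniquely from $\mu$, and the same argument with $\omega\star\tilde\theta(x)$ in place of $\tilde\theta(x)\star\omega$ handles the other slice.

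There is no real obstacle here beyond recognising that Lemma~\ref{lem:ma_mstar_submod} is doing all the work: it is exactly the statement that $\mu_0$ is only ever evaluated on $\tilde\theta(M(A))$, where it agrees with $\mu\circ\tilde\theta^{-1}$. One small bookkeeping point worth stating explicitly in the write-up is the injectivity of $\tilde\theta$ on $M(A)$ (already part of the standing hypotheses on $\theta$), which is what lets us talk about ``the'' element $y\in M(A)$ representing $\tilde\theta(x)\star\omega$.
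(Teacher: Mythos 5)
Your proof is correct and is essentially the paper's own argument: the paper likewise reduces the claim to the fact, supplied by Lemma~\ref{lem:ma_mstar_submod}, that $(\omega\otimes\id)\Delta_\M(\tilde\theta(x))$ and $(\id\otimes\omega)\Delta_\M(\tilde\theta(x))$ lie in $\tilde\theta(M(A))$ for every $\omega\in\M_*$, so that $\mu_0$ is only ever evaluated on $\tilde\theta(M(A))$, where it is determined by $\mu$. You simply make explicit the pairing against $\M_*$ and the injectivity of $\tilde\theta$ that the paper leaves implicit.
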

\begin{proof}
Considering $(\id\otimes\mu_0)\Delta_\M(\tilde\theta(x))$, our claim will
follow if $(\omega\otimes\id)\Delta_\M(\tilde\theta(x))$ is a member of
$\tilde\theta(M(A))$ for each $\omega\in\M_*$.  However, this follows from
the previous lemma.
\end{proof}

Unfortunately, there is no good reason why
$(\mu\otimes\id)\Delta_\M(\tilde\theta(x))$ should be a member of
$\tilde\theta(M(A))$.  We instead look to work more directly with $\M$.

\begin{proposition}
Let $x\in M(A)$.  Then $\tilde\theta(x) \in \wap(\M_*)$ if and only if
$x \in \wap(A^*)$ where $A^*$ is considered as the predual of the
Hopf von Neumann algebra $(A^{**},\Delta_{A^{**}})$.
\end{proposition}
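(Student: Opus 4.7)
The plan is to exploit the coproduct intertwining $(\tilde\theta \otimes \tilde\theta) \Delta_{A^{**}} = \Delta_\M \tilde\theta$. Setting $\phi := \tilde\theta$, this is a normal surjective $*$-homomorphism of von Neumann algebras whose preadjoint $\phi_* : \M_* \to A^*$ is an isometric Banach algebra embedding. Applying the normality of $\phi$ to the intertwining gives, for each $\omega \in \M_*$,
\[
(\id \otimes \omega)\Delta_\M(\tilde\theta(x)) = \phi\bigl( (\id \otimes \phi_*(\omega))\Delta_{A^{**}}(x) \bigr),
\]
so the orbit map $\M_* \to \M$ for $\tilde\theta(x)$ factors as the orbit map $\mu \mapsto (\id\otimes\mu)\Delta_{A^{**}}(x)$ for $x$ composed with $\phi_*$ on the right and $\phi$ on the left.

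For the forward direction, if $x \in \wap(A^*)$ then the orbit map $A^* \to A^{**}$ is weakly compact, and composition with the bounded maps $\phi$ and $\phi_*$ preserves weak compactness, giving $\tilde\theta(x) \in \wap(\M_*)$.

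The reverse direction is harder and crucially uses the hypothesis $x \in M(A)$. Given $\mu \in A^*$, one Cohen-factorizes against the bounded approximate identity of $A$: write $\mu = \nu \cdot a$ for some $a \in A$ and $\nu \in A^*$, where $(\nu \cdot a)(y) = \nu(ay)$. A short computation gives
\[
(\id \otimes \mu)\Delta_{A^{**}}(x) = (\id \otimes \nu)\bigl( (1 \otimes a)\Delta_{A^{**}}(x) \bigr).
\]
Since $x \in M(A)$, we have $\Delta_{A^{**}}(x) = \tilde\Delta_A(x) \in M(A \otimes A)$, and pre-multiplication by $1 \otimes a$ brings the expression into a manageable form. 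Applying $\phi \otimes \phi$ yields an explicit slice of $\Delta_\M(\tilde\theta(x))$ in $\M \vnten \M$. Combined with Lemma \ref{lem:ma_mstar_submod}, which keeps the $\M_*$-orbit of $\tilde\theta(x)$ inside $\tilde\theta(M(A)) \subseteq \M$, one compares $\{(\id \otimes \mu)\Delta_{A^{**}}(x) : \|\mu\|\leq 1\}$ with the relatively weakly compact orbit $\{(\id \otimes \omega)\Delta_\M(\tilde\theta(x)) : \|\omega\|\leq 1\}$ to deduce weak compactness on the $A^*$ side.

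The main obstacle is that $\phi_*$ is not surjective in general, so one cannot invert the forward implication at the level of Banach spaces alone; the $M(A)$-multiplier hypothesis on $x$ is precisely what allows the reduction of slices by arbitrary functionals in $A^*$ to computations inside the chosen Hopf von Neumann algebra $\M$, where weak compactness is provided by hypothesis.
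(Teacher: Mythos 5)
Your strategy coincides with the paper's: the implication from $x\in\wap(A^*)$ to $\tilde\theta(x)\in\wap(\M_*)$ by composing the orbit map with the bounded maps $\phi$ and $\phi_*$, and the converse by Cohen factorisation together with a comparison of the $A^*$-orbit of $x$ against the relatively weakly compact $\M_*$-orbit of $\tilde\theta(x)$. The forward direction is complete as written. The reverse direction, however, has a genuine gap: the sentence ``one compares \dots to deduce weak compactness on the $A^*$ side'' is where the entire proof lives, and it is not carried out.

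Three specific points need to be supplied. (i) Having written $\mu=\nu\cdot a$, you must approximate $\nu$ in $\sigma(A^*,A)$ by a \emph{bounded} net $\phi_*(\omega'_\alpha)$ with $\omega'_\alpha\in\M_*$, and verify that $S(\theta(a)\omega'_\alpha)$ converges weak$^*$ in $\M$ to $\phi\bigl((\id\otimes\mu)\Delta_{A^{**}}(x)\bigr)$. This is exactly where ``manageable form'' must be made precise: pairing against $\omega\in\M_*$ reduces to evaluating $\phi_*(\omega'_\alpha)$ at $a\cdot\bigl((\phi_*(\omega)\otimes\id)\Delta_{A^{**}}(x)\bigr)$, and you need this element to lie in $A$ (Lemma~\ref{lem:ma_mstar_submod} applied with $\M=A^{**}$ gives a factor in $M(A)$, and $aM(A)\subseteq A$) before weak$^*$-convergence in $A^*$ can be invoked; merely noting that $(1\otimes a)\tilde\Delta_A(x)\in M(A\otimes A)$ is not enough. (ii) Knowing that each $\phi\bigl((\id\otimes\mu)\Delta_{A^{**}}(x)\bigr)$ lies in the weak$^*$ closure of the bounded orbit $X=\{S(\tau):\|\tau\|\leq C\}$ does not yet give weak compactness: one must add that the weak$^*$ closure of the relatively weakly compact convex set $X$ agrees with its norm closure, which is weakly compact. (iii) Even then you have only shown that $\phi\circ T$ is weakly compact, where $T(\mu)=(\id\otimes\mu)\Delta_{A^{**}}(x)$, and $\phi$ is far from injective on $A^{**}$; to pull weak compactness back to $T$ you must use that $T$ takes values in $M(A)$ (Lemma~\ref{lem:ma_mstar_submod} again, now for the $A^*$-action on $x$ inside $A^{**}$) and that $\phi=\tilde\theta$ restricted to $M(A)$ is an isometry with norm-closed, hence weakly closed, range. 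Note that your one citation of Lemma~\ref{lem:ma_mstar_submod} is aimed at keeping the $\M_*$-orbit of $\tilde\theta(x)$ in $\tilde\theta(M(A))$, which is the one place it is not actually needed.
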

\begin{proof}
Let $T:A^*\rightarrow A^{**}$ be the map $T(\mu) = (\mu\otimes\id)
\Delta_{A^{**}}(x)$.  By Lemma~\ref{lem:ma_mstar_submod} applied with
$\M=A^{**}$, we see that $T$ maps into $M(A)\subseteq A^{**}$
and so $x\in\wap(A^*)$ if and only if $T$ is weakly compact, if and only if
the corestriction $T:A^*\rightarrow M(A)$ is weakly compact.  Similarly
let $S:\M_*\rightarrow \M$ be $S(\omega) = (\omega\otimes\id)\Delta_\M(\tilde\theta(x))$
so that $\tilde\theta(x)\in\wap(\M_*)$ if and only if $S$ is weakly compact.

With $\phi:A^{**}\rightarrow \M$ as above, we have the commutative diagram
\[ \xymatrix{ A^* \ar[r]^T & M(A) \ar@{^(->}[r] & A^{**} \ar[ld]^\phi \\
\M_* \ar[u]^{\theta^*|_{\M_*}} \ar[r]^S & \M } \]
Thus, if $T$ is weakly compact, then so is $S$.

Suppose now that $S$ is weakly compact.  Let $\mu\in A^*$ with $\|\mu\|< 1$,
so again we can find $a\in A, \mu'\in A^*$ with $\mu = a\mu'$ and $\|a\|\|\mu'\|<1$.
Choose a net $(\omega'_\alpha)$ in the unit ball of $\M_*$ with
$\theta^*(\omega'_\alpha)\rightarrow\mu'$ weak$^*$ in $A^*$.  
For each $\alpha$ set $\omega_\alpha = \theta(a)\omega'_\alpha$.
For $\omega = \theta(c)\omega'\in \M_*$, we have that
\begin{align*} \ip{\phi T(\mu)}{\omega} &= 
\ip{\theta\big( (\mu'\otimes\id)( \Delta_{A^{**}}(x)(a\otimes c) ) \big)}{\omega'} \\
&= \lim_\alpha \ip{(\theta\otimes\theta)(\tilde\Delta_A(x)(a\otimes c))}
   {\omega'_\alpha\otimes\omega'} \\
&= \lim_\alpha \ip{\Delta_\M(\tilde\theta(x))(\theta(a)\otimes\theta(c))}
   {\omega'_\alpha\otimes\omega'} \\
&= \lim_\alpha \ip{S(\omega_\alpha)}{\omega}.
\end{align*}
As $S$ is weakly compact, $X = \{ S(\tau) : \tau\in \M_*, \|\tau\|\leq 1\}$ is
relatively weakly compact in $\M$.  The above shows that $\phi T(\mu)$ is in
the weak$^*$ closure of $X$, but as $X$ is relatively weakly compact and convex,
this agrees with the norm closure of $X$, which is a weakly compact set.
We conclude that $\phi T$ maps the unit ball of $A^*$ into a relatively
weakly compact subset of $\M$, that is, $\phi T$ is weakly compact.  As $T$ actually
maps into $M(A)$ and $\phi$ restricted to $M(A)$ is an isometry, it follows that
$T$ is weakly compact, as required.
\end{proof}

\begin{definition}
Let $\wap(A,\Delta_A) = \{ x\in M(A) : \tilde\theta(x) \in \wap(\M,\Delta_\M) \}$,
a $C^*$-subalgebra of $M(A)$.
By the proposition, this space depends only on $(A,\Delta_A)$.
\end{definition}

The following is the analogue of Theorem~\ref{thm:wap_cqss}.  It can again
be shown that the construction is independent of the choice of $\M$.

\begin{theorem}
Let $\wap = \wap(A,\Delta_A)$.
For $x\in\wap$, we have that $\Delta_\M(\tilde\theta(x))$ is
in $\wap\scten\wap \subseteq M(A)\scten M(A) \subseteq \M\scten \M \subseteq \M\vnten \M$.
As such, $\Delta_\M$ restricts to a map $\Delta_{\wap}:\wap\rightarrow
\wap\scten\wap$.
\end{theorem}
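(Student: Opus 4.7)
The plan is to mimic the strategy of Theorem~\ref{thm:wap_cqss} and reduce the claim, via Theorem~\ref{thm:inclusion}, to checking that the slices $(\mu\otimes\id)(y)$ and $(\id\otimes\mu)(y)$ lie in $\tilde\theta(\wap)$ for every $\mu\in\M^*$, where $y\in\M\scten\M$ is the element corresponding to $\Delta_\M(\tilde\theta(x))$. Such a $y$ exists because $x\in\wap$ means $\tilde\theta(x)\in\wap(\M,\Delta_\M)$, which is exactly the condition that $\Delta_\M(\tilde\theta(x))$ lie in the image of $\M\scten\M$ inside $\M\vnten\M$ (Theorem~\ref{thm:defn_wap_hvna} and the remark following it).

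Next I would fix $\mu\in\M^*$, choose a bounded net $(\omega_\alpha)$ in $\M_*$ with $\omega_\alpha\to\mu$ weak$^*$, and run the computation already present in the proof of Theorem~\ref{thm:wap_cqss} to identify $(\mu\otimes\id)(y)$ as the weak limit (after passing to a subnet, using that $\tau\mapsto\tilde\theta(x)\star\tau$ is weakly compact) of the net $\tilde\theta(x)\star\omega_\alpha$ inside $\M$. No new calculation is needed here; the earlier argument applies verbatim because $\tilde\theta(x)\in\wap(\M,\Delta_\M)$.

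The crucial step, and really the only new content beyond Theorem~\ref{thm:wap_cqss}, is to observe that each term of the net $\tilde\theta(x)\star\omega_\alpha$ sits simultaneously in two subspaces of $\M$: in $\tilde\theta(M(A))$ by Lemma~\ref{lem:ma_mstar_submod}, and in $\wap(\M,\Delta_\M)$ by Theorem~\ref{thm:wap_submod}. Both subspaces are norm closed (the former because $\tilde\theta|_{M(A)}$ is an injective $*$-homomorphism hence isometric; the latter by Theorem~\ref{thm:defn_wap_hvna}), hence weakly closed, so the weak limit $(\mu\otimes\id)(y)$ lies in their intersection $\tilde\theta(M(A))\cap\wap(\M,\Delta_\M)$, which by the very definition of $\wap(A,\Delta_A)$ equals $\tilde\theta(\wap)$. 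The symmetric argument yields $(\id\otimes\mu)(y)\in\tilde\theta(\wap)$, and Theorem~\ref{thm:inclusion} then packages these facts as $y\in\tilde\theta(\wap)\scten\tilde\theta(\wap)$, giving the required restriction $\Delta_\wap:\wap\to\wap\scten\wap$.

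I do not foresee a serious obstacle: all substantive work has been concentrated in the earlier results, and the present theorem is a combination of them. The only thing that requires care is bookkeeping of the identifications $\wap\subseteq M(A)\subseteq\M$ via $\tilde\theta$ together with the tower $\wap\scten\wap\subseteq M(A)\scten M(A)\subseteq\M\scten\M\subseteq\M\vnten\M$ that arises from iterated applications of Theorem~\ref{thm:inclusion}. In spirit, Lemma~\ref{lem:ma_mstar_submod} is exactly what allows the ``stay inside $M(A)$'' half of the argument to coexist with the ``stay inside $\wap(\M,\Delta_\M)$'' half supplied by Theorem~\ref{thm:wap_submod}.
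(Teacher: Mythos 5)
Your proof is correct and takes essentially the same route as the paper: both reduce, exactly as in Theorem~\ref{thm:wap_cqss}, to showing that the slices of $\Delta_\M(\tilde\theta(x))$ land in $\wap$, and both obtain this from the observation that $\wap = M(A)\cap\wap(\M,\Delta_\M)$ is the intersection of two $\M_*$-submodules (Lemma~\ref{lem:ma_mstar_submod} and Theorem~\ref{thm:wap_submod}). The paper merely compresses your explicit weak-compactness and weak-closedness argument into the phrase ``by weak compactness, it suffices to show this for $\mu\in\M_*$.''
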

\begin{proof}
Exactly as in the proof of Theorem~\ref{thm:wap_cqss}, this will follow if we
can show that $(\mu\otimes\id)\Delta_\M(\tilde\theta(x)) \in \wap$ for
$\mu\in \M^*$ (and analoguously for $\id\otimes\mu$).  By weak compactness, it
suffices to show this for $\mu\in \M_*$, that is, that $\wap\subseteq \M$ is
an $\M_*$-submodule.  However, $\wap = M(A) \cap \wap(\M,\Delta_\M)$ and we know that
$\wap(\M,\Delta_\M)$ is an $\M_*$-submodule, so the result follows from
Lemma~\ref{lem:ma_mstar_submod}.
\end{proof}

We could now continue to prove an analogue of Theorem~\ref{thm:cqss_wap_compact} in
this setting.  We leave the details to the reader.

\subsection{For locally compact quantum groups}

For $G$ a locally compact group, the classical theory tells us that
$\wap(L^\infty(G)) = \wap(C_b(G))$, see for example \cite{ulger}.
We now make some remarks in this direction
in the setting of locally compact quantum groups.

We recall the notion of a locally compact quantum group $\G$ being
\emph{coamenable}, \cite{bt}.  In our setting, the most useful equivalent
definition is that $\G$ is coamenable if and only if $L^1(\G)$ has a bounded
approximate identity.

\begin{theorem}\label{thm:coamen}
Let $\G$ be coamenable.  Then $\wap(L^\infty(\G),\Delta)$ is contained in
$M(C_0(\G))$ and so agrees with $\wap(C_0(\G),\Delta)$.
\end{theorem}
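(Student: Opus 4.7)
The plan is to use the bounded approximate identity that coamenability supplies for the convolution algebra $L^1(\G)$ to approximate $x\in\wap(L^\infty(\G),\Delta)$ weakly by elements already known to lie in $M(C_0(\G))$, and then to invoke Mazur's theorem to conclude that $x$ itself lies in $M(C_0(\G))$.

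Concretely, fix a bounded approximate identity $(e_\alpha)$ for $L^1(\G)$, and for $x\in\wap(L^\infty(\G),\Delta)$ set $y_\alpha = e_\alpha\star x = (\id\otimes e_\alpha)\Delta(x)$. Each $y_\alpha$ lies in $M(C_0(\G))$, by the standard fact that $L^1(\G)\star L^\infty(\G)\subseteq M(C_0(\G))$ (the ``left uniformly continuous'' elements; compare \cite{runde2,hnr}). Using the BAI property, for any $\omega\in L^1(\G)$ we have $\ip{y_\alpha}{\omega} = \ip{x}{\omega\star e_\alpha}\to\ip{x}{\omega}$, so $y_\alpha\to x$ in the weak$^*$ topology of $L^\infty(\G)$. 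On the other hand, since $x\in\wap(L^\infty(\G),\Delta)$ the orbit map $T_x:\omega\mapsto\omega\star x$ is weakly compact, so the bounded set $\{y_\alpha\}=\{T_x(e_\alpha)\}$ is relatively weakly compact; any weak cluster point of $(y_\alpha)$ must then agree with the weak$^*$ limit $x$, forcing $y_\alpha\to x$ weakly in $L^\infty(\G)$.

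Since $M(C_0(\G))$ is a norm-closed convex subspace of $L^\infty(\G)$, Mazur's theorem ensures that it is also weakly closed, and therefore $x\in M(C_0(\G))$, proving $\wap(L^\infty(\G),\Delta)\subseteq M(C_0(\G))$. The second assertion is then immediate: by the definition in Section~\ref{sec:cty}, with $\tilde\theta:M(C_0(\G))\to L^\infty(\G)$ the inclusion, $\wap(C_0(\G),\Delta) = M(C_0(\G))\cap\wap(L^\infty(\G),\Delta)$, which by what we have just shown equals $\wap(L^\infty(\G),\Delta)$. The only genuinely nontrivial ingredient is the cited inclusion $L^1(\G)\star L^\infty(\G)\subseteq M(C_0(\G))$; without it we would know nothing about $e_\alpha\star x$ beyond its membership in $L^\infty(\G)$, and the whole scheme collapses.
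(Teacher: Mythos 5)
Your proof is correct and follows essentially the same route as the paper: a bounded approximate identity argument combined with the fact that $L^1(\G)\star L^\infty(\G)\subseteq M(C_0(\G))$ (Runde's result), plus weak compactness of the orbit map to upgrade weak$^*$ convergence of $e_\alpha\star x$ to weak convergence into the norm-closed (hence weakly closed) subspace $M(C_0(\G))$. You have merely filled in the details of the ``bounded approximate identity argument'' that the paper leaves as a sketch with a reference to \cite{ulger}.
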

\begin{proof}
If $x\in\wap(L^\infty(\G),\Delta)$ then $x\in\wap(L^1(\G))$, and a bounded
approximate identity argument shows that $x$ is contained in the norm closure
of $\{ \omega\star x: \omega\in L^1(\G) \}$ (as in the classical case,
compare \cite{ulger}).  Then use that $\omega\star x\in M(C_0(\G))$ for any
$x\in L^\infty(\G), \omega\in L^1(\G)$, see \cite[Theorem~2.4]{runde2};
or see directly \cite[Remark~4.5]{runde2}.
\end{proof}

We remark that similarly \cite[Theorem~4.4]{runde2} immediately implies
that $C_0(\G) \subseteq \wap(C_0(\G),\Delta) \subseteq
\wap(L^\infty(\G),\Delta)$, for any $\G$.

\section{Questions for further study}\label{sec:open}

We wrote $\scten$ by analogy with the theory of tensor products (compare,
for example, the extended Haagerup tensor product, \cite{er2}).  It is easy
to see that $A\scten B$ is isometrically isomorphic to $B\scten A$.
Is $\scten$ ``associative''?  Firstly, $(A\scten B)\scten C \subseteq
(A\scten B)^{**} \vnten C^{**}$ and $A\scten (B\scten C) \subseteq
A^{**}\vnten (B\scten C)^{**}$, and we cannot directly compare these,
but we can embed both spaces into $A^{**}\vnten B^{**}\vnten C^{**}$ using
the ideas of Section~\ref{sec:for_vn_algs}.  However, we have been unable to
decide if the two embedded spaces agree.

Condition (\ref{thm:main:three}) of Theorem~\ref{thm:main} is stated in
terms of ``rows'' and ``columns'' of operator matrices.  Such notions are
prominent in the theory of operator spaces.  Is there perhaps a way these ideas
could be made to work profitably for general operator spaces, not just
$C^*$-algebras?

The theory as applied to locally compact quantum groups gives maybe
three main questions:
\begin{itemize}
\item Is Theorem~\ref{thm:coamen} true without the coamenable hypothesis?
\item Does $\wap(L^\infty(\G),\Delta)$ always (or sometimes!) have an invariant
mean?
\item When does $\wap(L^\infty(\G),\Delta) = \wap(L^1(\G))$?
For an amenable discrete group $G$, this is true for $\widehat G$, that is,
$\wap(A(G)) = \wap(VN(G),\Delta)$.  This follows from
\cite[Proposition~2]{gran}, showing that $\wap(A(G)) = UCB(\widehat G)$,
and \cite[Proposition~2]{granier2}, which shows that $UCB(\widehat G)$ is
a $C^*$-algebra.  As far as we are aware, our question is open for
$\widehat{\mathbb F_2}$, for example.
\end{itemize}
Finally, we studied compactifications of $C^*$-Eberlein algebras in
\cite{dawsdas}: these are generated by coefficients of certain special
unitary corepresentations of $\G$.  Is it true that the coefficients of
any unitary corepresentation of $\G$ live in $\wap(C_0(\G),\Delta)$?

\vspace{5ex}

\noindent\emph{Author's Address:}
\parbox[t]{3in}{School of Mathematics\\
University of Leeds\\
Leeds\\
LS2 9JT}

\bigskip\noindent\emph{Email:} \texttt{matt.daws@cantab.net}


\begin{thebibliography}{99}

\bibitem{arens} R. Arens,
   ``The adjoint of a bilinear operation'',
   \emph{Proc. Amer. Math. Soc.} 2 (1951) 839--848.

\bibitem{bs} S. Baaj, G. Skandalis,
   ``Unitaires multiplicatifs et dualit\'e pour les produits
              crois\'es de {$C^*$}-alg\`ebres'',
   \emph{Ann. Sci. \'Ecole Norm. Sup. (4)} 26 (1993) 425--488.

\bibitem{bt} E. B{\'e}dos, L. Tuset,
   ``Amenability and co-amenability for locally compact quantum groups'',
   \emph{Internat. J. Math.} 14 (2003) 865--884.

\bibitem{bjm} J. Berglund, H. Junghenn, P. Milnes,
   ``Analysis on semigroups.
   Function spaces, compactifications, representations'',
   (John Wiley \& Sons, Inc., New York, 1989).

\bibitem{bd} F.\,F. Bonsall, J. Duncan,
   ``Complete normed algebras'',
   (Springer-Verlag, New York-Heidelberg, 1973).

\bibitem{conway} J.\,B. Conway,
   A course in functional analysis. Second edition.
   (Springer-Verlag, New York, 1990)

\bibitem{dawsdas} B. Das, M. Daws,
   ``Quantum Eberlein compactifications and invariant means'',
   to appear in \emph{Indiana Univ. Math. J.}, see arXiv:1406.1109v1 [math.FA]

\bibitem{daws} M. Daws,
  ``Remarks on the Quantum Bohr Compactification'',
  \emph{Illinois J. Math.} 57 (2013) 1131--1171.

\bibitem{dawswap} M. Daws,
   ``Characterising weakly almost periodic functionals on the measure algebra'',
   \emph{Studia Math.} 204 (2011) 213--234.

\bibitem{dawsdis} M. Daws,
   ``Multipliers, self-induced and dual Banach algebras'',
   \emph{Dissertationes Math.} 470 (2010) 62 pp.

\bibitem{dawsdba} M. Daws,
   ``Dual {B}anach algebras: representations and injectivity'',
   \emph{Studia Math.} 178 (2007) 231--275.

\bibitem{er2} E.\,G. Effros, Z.-J. Ruan,
   ``Operator space tensor products and {H}opf convolution algebras'',
   \emph{J. Operator Theory	} 50 (2003) 131--156.

\bibitem{ER} E.\,G. Effros, Z.-J. Ruan,
   Operator spaces.
   (Oxford University Press, New York, 2000).

\bibitem{eymard} P. Eymard,
   ``L'alg\`ebre de {F}ourier d'un groupe localement compact'',
   \emph{Bull. Soc. Math. France} 92 (1964) 181--236.

\bibitem{gran} E.\,E. Granirer,
   ``Weakly almost periodic and uniformly continuous functionals on
              the {F}ourier algebra of any locally compact group'',
   \emph{Trans. Amer. Math. Soc.} 189 (1974) 371--382.

\bibitem{granier2} E.\,E. Granirer,
   ``Density theorems for some linear subspaces and some
              {$C\sp*$}-subalgebras of {${\rm VN}(G)$}'',
   in Symposia {M}athematica, {V}ol. {XXII} ({C}onvegno
              sull'{A}nalisi {A}rmonica e {S}pazi di {F}unzioni su {G}ruppi
              {L}ocalmente {C}ompatti, {INDAM}, {R}ome, 1976)
   (1977) 61--70.

\bibitem{hnr} Z. Hu, M. Neufang, Z.-J. Ruan,
   ``Module maps over locally compact quantum groups'',
   \emph{Studia Math.} 211 (2012) 111--145.

\bibitem{kvvn} J. Kustermans, S. Vaes,
   ``Locally compact quantum groups in the von Neumann algebraic setting'',
   \emph{Math. Scand.} 92 (2003) 68--92. 

\bibitem{kv} J. Kustermans, S. Vaes,
   ``Locally compact quantum groups'',
   \emph{Ann. Sci. \'Ecole Norm. Sup. (4)} 33 (2000) 837--934.

\bibitem{ll} A.\,T.-M. Lau, R.\,J. Loy,
   ``Weak amenability of {B}anach algebras on locally compact groups'',
   \emph{J. Funct. Anal.} 145 (1997) 175--204.

\bibitem{mnw} T. Masuda, Y. Nakagami, S.\,L. Woronowicz,
   ``A {$C^\ast$}-algebraic framework for quantum groups'',
   \emph{Internat. J. Math.} 14 (2003) 903--1001.

\bibitem{meg} M. Megrelishvili,
   ``Fragmentability and representations of flows'',
   \emph{Topology Proc.} 2003 (27) 497--544.

\bibitem{paulsen} V.\,I. Paulsen,
   Completely bounded maps and operator algebras.
   (Cambridge University Press, Cambridge, 2002).

\bibitem{Pisier} G. Pisier,
   Introduction to operator space theory.
   (Cambridge University Press, Cambridge, 2003).

\bibitem{runde2} V. Runde,
   ``Uniform continuity over locally compact quantum groups'',
   \emph{J. Lond. Math. Soc. (2)} 80 (2009) 55--71.

\bibitem{runde} V. Runde,
   ``Connes-amenability and normal, virtual diagonals for measure
              algebras. {I}'',
   \emph{J. London Math. Soc. (2)} 67 (2003) 643--656.

\bibitem{salmi} P. Salmi,
   ``Quantum semigroup compactifications and uniform continuity on locally
   compact quantum groups'',
   \emph{Illinois J. Math.} 54 (2010) 469--483.

\bibitem{soltan} P. So{\l}tan,
   ``Quantum {B}ohr compactification'',
   \emph{Illinois J. Math.} 49 (2005) 1245--1270.

\bibitem{tak} M. Takesaki,
   Theory of operator algebras. I.
   (Springer-Verlag, Berlin, 2002).

\bibitem{ulger}  A. {\"U}lger,
   ``Continuity of weakly almost periodic functionals on {$L^1(G)$}'',
   \emph{Quart. J. Math. Oxford Ser. (2)} 148 (1986) 495--497.

\end{thebibliography}
\end{document}